\newtheorem{thm1}{Theorem}
\newtheorem{thm}{Theorem}[section] 
\newtheorem{lemma}[thm]{Lemma}
\newtheorem{prop}[thm]{Proposition}
\newtheorem{cor}[thm]{Corollary}
\theoremstyle{definition}
\newtheorem{remark}[thm]{Remark}
\newtheorem{defn}[thm]{Definition}
\DeclareMathOperator{\Gr}{Gr}
\DeclareMathOperator{\Fl}{Fl}
\DeclareMathOperator{\LG}{LG}
\DeclareMathOperator{\OG}{OG}
\DeclareMathOperator{\Span}{Span}
\DeclareMathOperator{\QH}{QH}
\DeclareMathOperator{\QK}{QK}
\newcommand{\bP}{{\mathbb P}}
\renewcommand{\P}{{\mathbb P}}
\newcommand{\C}{{\mathbb C}}
\newcommand{\Z}{{\mathbb Z}}
\newcommand{\cO}{{\mathcal O}}
\newcommand{\cZ}{{\mathcal Z}}
\newcommand{\euler}[1]{\chi_{_{#1}}}
\newcommand{\pt}{\text{point}}
\newcommand{\ev}{\operatorname{ev}}
\newcommand{\wt}{\widetilde}
\newcommand{\wb}{\overline}
\newcommand{\pic}[2]{\includegraphics[scale=#1]{#2}}
\newcommand{\bd}{{\bf d}}
\newcommand{\be}{{\bf e}}
\newcommand{\ignore}[1]{}
\newcommand{\Mb}{\wb{\mathcal M}}
\def\noin{\noindent}
\def\N{{\mathbb N}}
\begin{document}

\title{Finiteness of cominuscule quantum $K$-theory}

\date{April 15, 2012}

\author{Anders~S.~Buch}
\address{Department of Mathematics, Rutgers University, 110
  Frelinghuysen Road, Piscataway, NJ 08854, USA}
\email{asbuch@math.rutgers.edu}

\author{Pierre--Emmanuel Chaput}
\address{Domaine Scientifique Victor Grignard, 239, Boulevard des
  Aiguillettes, Universit{\'e} Henri Poincar{\'e} Nancy 1, B.P. 70239,
  F-54506 Vandoeuvre-l{\`e}s-Nancy Cedex, France}
\email{chaput@iecn.u-nancy.fr}

\author{Leonardo~C.~Mihalcea}
\address{Department of Mathematics, Virginia Tech, 460 McBryde,
  Blacksburg VA 24060, USA}
\email{lmihalce@math.vt.edu}

\author{Nicolas Perrin}
\address{Hausdorff Center for Mathematics, Universit\"at Bonn, Villa
  Maria, Endenicher Allee 62, 53115 Bonn, Germany and Institut de
  Math{\'e}matiques de Jussieu, Universit{\'e} Pierre et Marie Curie,
  Case 247, 4 place Jussieu, 75252 Paris Cedex 05, France}
\email{nicolas.perrin@hcm.uni-bonn.de}

\subjclass[2000]{Primary 14N35; Secondary 19E08, 14N15, 14M15, 14M20, 14M22}

\thanks{The first author was supported in part by NSF Grant
  DMS-0906148.}

\begin{abstract}
  The product of two Schubert classes in the quantum $K$-theory ring
  of a homogeneous space $X = G/P$ is a formal power series with
  coefficients in the Grothendieck ring of algebraic vector bundles on
  $X$.  We show that if $X$ is cominuscule, then this power series has
  only finitely many non-zero terms.  The proof is based on a
  geometric study of boundary Gromov-Witten varieties in the
  Kontsevich moduli space, consisting of stable maps to $X$ that take
  the marked points to general Schubert varieties and whose domains
  are reducible curves of genus zero.  We show that all such varieties
  have rational singularities, and that boundary Gromov-Witten
  varieties defined by two Schubert varieties are either empty or
  unirational.  We also prove a relative Kleiman-Bertini theorem for
  rational singularities, which is of independent interest.  A key
  result is that when $X$ is cominuscule, all boundary Gromov-Witten
  varieties defined by three single points in $X$ are rationally
  connected.
\end{abstract}

\maketitle

\markboth{A.~BUCH, P.--E.~CHAPUT, L.~MIHALCEA, AND N.~PERRIN}
{FINITENESS OF COMINUSCULE QUANTUM $K$-THEORY}

\section{Introduction}\label{sec:intro}

The goal of this paper is to prove that any product of Schubert
classes in the quantum $K$-theory ring of a cominuscule homogeneous
space contains only finitely many non-zero terms.

Let $X = G/P$ be a homogeneous space defined by a semisimple complex
Lie group $G$ and a parabolic subgroup $P$, and let $\Mb_{0,n}(X,d)$
denote the Kontsevich moduli space of $n$-pointed stable maps to $X$
of degree $d$, with total evaluation map $\ev : \Mb_{0,n}(X,d) \to
X^n$.  Given Schubert varieties $\Omega_1,\dots,\Omega_n \subset X$ in
general position, there is a {\em Gromov-Witten variety\/}
$\ev^{-1}(\Omega_1 \times \dots \times \Omega_n) \subset
\Mb_{0,n}(X,d)$, consisting of all stable maps that send the $i$-th
marked point into $\Omega_i$ for each $i$.  The Kontsevich space and
its Gromov-Witten varieties are the foundation of the quantum
cohomology ring of $X$, whose structure constants are the
(cohomological) Gromov-Witten invariants, defined as the number of
points in finite Gromov-Witten varieties.  More generally, the {\em
  $K$-theoretic Gromov-Witten invariant\/}
$I_d(\cO_{\Omega_1},\dots,\cO_{\Omega_n})$ is defined as the sheaf
Euler characteristic of $\ev^{-1}(\Omega_1 \times \dots \times
\Omega_n)$, which makes sense when this variety has positive
dimension.  The $K$-theoretic invariants are more challenging to
compute, both because they are not enumerative, and also because they
do not vanish for large degrees.

Assume for simplicity that $P$ is a maximal parabolic subgroup of $G$,
so that $H_2(X;\Z) = \Z$.  The (small) {\em quantum $K$-theory ring\/}
$\QK(X)$ is a formal deformation of the Grothendieck ring $K(X)$ of
algebraic vector bundles on $X$, which as a group is defined by
$\QK(X) = K(X) \otimes_\Z \Z[[q]]$.  The product $\cO_u \star \cO_v$
of two Schubert structure sheaves is defined in terms of structure
constants $N^{w,d}_{u,v} \in \Z$ such that
\[
\cO_u \star \cO_v = \sum_{w,\,d \geq 0} N^{w,d}_{u,v}\, q^d\, \cO_w \,.
\]
In contrast to the quantum cohomology ring $\QH(X)$, the constants
$N^{w,d}_{u,v}$ are not single Gromov-Witten invariants, but are
defined as polynomial expressions of the $K$-theoretic Gromov-Witten
invariants.  A result of Givental asserts that $\QK(X)$ is an
associative ring \cite{givental:on}.  Since the $K$-theoretic
Gromov-Witten invariants do not vanish for large degrees, the same
might be true for the structure constants $N^{w,d}_{u,v}$, in which
case the product $\cO_u \star \cO_v$ would be a power series in $q$
with infinitely many non-zero terms.  When $X$ is a Grassmannian of
type A, a combinatorial argument in \cite{buch.mihalcea:quantum} shows
that this does not happen; all products in $\QK(X)$ are finite.  In
this paper we give a different geometric proof that shows more
generally that all products in $\QK(X)$ are finite whenever $X$ is a
cominuscule homogeneous space.  As a consequence, the quantum
$K$-theory ring $\QK(X)$ provides an honest deformation of $K(X)$.
The class of cominuscule varieties consists of Grassmannians of type
A, Lagrangian Grassmannians, maximal orthogonal Grassmannians, and
quadric hypersurfaces.  In addition there are two exceptional
varieties of type E called the Cayley plane and the Freudenthal
variety.

Let $d_X(n)$ be the minimal degree of a rational curve passing through
$n$ general points of $X$.  The numbers $d_X(n)$ for $n\leq 3$ have
been computed explicitly in \cite{chaput.manivel.ea:quantum*1,
  chaput.perrin:rationality}, see the table in \S\ref{sec:comin}
below.  Our main result is the following.

\begin{thm1}\label{thm:intro_vanish}
  Let $X$ be a cominuscule variety.  Then $N^{w,d}_{u,v} = 0$ for $d >
  d_X(2)$.
\end{thm1}

Theorem~\ref{thm:intro_vanish} holds also for the structure constants
of the equivariant quantum $K$-theory ring $\QK_T(X)$, see
Remark~\ref{rmk:eqkfinite}.  The bound on $d$ is sharp in the sense
that $q^{d_X(2)}$ occurs in the square of a point in $\QK(X)$.  In
addition, this bound is also the best possible for the quantum
cohomology ring $\QH(X)$ that does not depend on $u$, $v$, and $w$
(cf.\ \cite{fulton.woodward:on}).

Our proof uses that the structure constants $N^{w,d}_{u,v}$ can be
rephrased as alternating sums of certain boundary Gromov-Witten
invariants.  Given a sequence $\bd = (d_0,d_1,\dots,d_r)$ of effective
degrees $d_i \in H_2(X;\Z)$ such that $d_i>0$ for $i>0$ and $\sum d_i
= d$, let $M_\bd \subset \Mb_{0,3}(X,d)$ be the closure of the locus
of stable maps for which the domain is a chain of $r+1$ projective
lines that map to $X$ in the degrees given by $\bd$, the first and
second marked points belong to the first projective line, and the
third marked point is on the last projective line.  Then any constant
$N^{w,d}_{u,v}$ can be expressed as an alternating sum of sheaf Euler
characteristics of varieties of the form $\ev^{-1}(\Omega_1 \times
\Omega_2 \times \Omega_3) \cap M_\bd$.  We use geometric arguments to
show that the terms of this sum cancel pairwise whenever $X$ is
cominuscule and $d > d_X(2)$.

Set $\cZ_\bd = \ev(M_\bd) \subset X^3$.  A key technical fact in our
proof is that the general fibers of the map $\ev : M_\bd \to \cZ_\bd$
are rationally connected.  Notice that these fibers are boundary
Gromov-Witten varieties $\ev^{-1}(x \times y \times z) \cap M_\bd$
defined by three single points in $X$, and the result generalizes the
well known fact that there is a unique rational curve of degree $d$
through three general points in the Grassmannian $\Gr(d,2d)$
\cite{buch.kresch.ea:gromov-witten}.  In the special case when
$\bd=(d)$ and $M_\bd = \Mb_{0,3}(X,d)$, it was shown in
\cite{buch.mihalcea:quantum, chaput.perrin:rationality} that the
general fibers of $\ev$ are rational; our proof uses this case as well
as Graber, Harris, and Starr's criterion for rational connectivity
\cite{graber.harris.ea:families}.

We also need to know that $M_\bd$ has rational singularities.  For
this we prove a relative version of the Kleiman-Bertini theorem
\cite{kleiman:transversality} for rational singularities.  This
theorem implies that any boundary Gromov-Witten variety in
$\Mb_{0,n}(X,d)$ has rational singularities, for any homogeneous space
$X$.  The Kleiman-Bertini theorem generalizes a result of Brion
asserting that rational singularities are preserved when a subvariety
of a homogeneous space is intersected with a general Schubert variety
\cite{brion:positivity}.

Finally, if $\Omega_1$ and $\Omega_2$ are Schubert varieties in
general position in a homogeneous space $X$, we prove that
$\ev_1^{-1}(\Omega_1) \cap M_\bd$ is unirational and
$\ev_1^{-1}(\Omega_1) \cap \ev_2^{-1}(\Omega_2) \cap M_\bd$ is either
empty or unirational.  In particular, we have $I_d(\cO_{\Omega_1}) =
1$ and $I_d(\cO_{\Omega_1}, \cO_{\Omega_2}) \in \{0,1\}$.  This is
done by showing that any Borel-equivariant map to a Schubert variety
is locally trivial over the open cell.  In particular, any single
evaluation map $\ev_i : M_\bd \to X$ is locally trivial.

Our paper is organized as follows.  In section~\ref{sec:kb} we prove
the Kleiman-Bertini theorem for rational singularities and give a
simple criterion for an equivariant map to be locally trivial.  These
results are applied to (boundary) Gromov-Witten varieties of general
homogeneous spaces in section~\ref{sec:gwvar}.
Section~\ref{sec:comin} proves some useful facts about images of
Gromov-Witten varieties of cominuscule spaces, among them that the
general fibers of $\ev : M_\bd \to \cZ_\bd$ are rationally connected.
Finally, section~\ref{sec:qkcomin} applies these results to show that
$K$-theoretic quantum products on cominuscule varieties are finite.

Parts of this work was carried out during visits to the Mathematical
Sciences Research Institute (Berkeley), the Centre International de
Rencontres Math{\'e}matiques (Luminy), and the Max-Planck-Institut
f{\"u}r Mathematik (Bonn).  We thank all of these institutions for
their hospitality and stimulating environments.  We also benefited
from helpful discussions with P.~Belkale, S.~Kumar, and F.~Sottile.

\section{A Kleiman-Bertini theorem for rational singularities}\label{sec:kb}

\begin{defn}
  Let $G$ be a connected algebraic group and $X$ a $G$-variety.  A
  {\em splitting\/} of the action of $G$ on $X$ is a morphism $s : U
  \to G$ defined on a dense open subset $U \subset X$, together with a
  point $x_0 \in U$, such that $s(x).x_0 = x$ for all $x \in U$.  If a
  splitting exists, then we say that the action is {\em split\/} and
  that $X$ is $G$-{\em split}.
\end{defn}

Notice that any $G$-split variety contains a dense open orbit.
Schubert varieties are our main examples of varieties with a split
action.

\begin{prop}\label{prop:split}
  Let $G$ be a semisimple complex Lie group, $P \subset G$ a parabolic
  subgroup, and $X = G/P$ the corresponding homogeneous space with its
  natural $G$-action.  Then $X$ is $G$-split.  Furthermore, if $B
  \subset G$ is a Borel subgroup and $\Omega \subset X$ is a
  $B$-stable Schubert variety, then $\Omega$ is $B$-split.
\end{prop}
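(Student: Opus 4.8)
The plan is to produce explicit splittings in both cases by using the Bruhat-type cell decompositions together with the structure of unipotent subgroups. For the first claim, that $X = G/P$ is $G$-split, I would fix a Borel subgroup $B \subset P$ and let $B^- \subset G$ be the opposite Borel, with unipotent radical $U^- \subset B^-$. The big cell $U^-.eP \subset G/P$ is a dense open subset, and the multiplication map $U^- \to U^-.eP$, $u \mapsto u.eP$, is an isomorphism of varieties (this is the standard fact that the big cell is an affine space on which $U^-$ acts simply transitively). Its inverse, composed with the inclusion $U^- \hookrightarrow G$, is then a morphism $s : U^-.eP \to G$ with $s(x).eP = x$ for all $x$ in the big cell. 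Taking $U = U^-.eP$ and $x_0 = eP$ gives the required splitting.

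For the second claim, I would argue that the Schubert variety $\Omega = \overline{BwP/P}$ is $B$-split by producing a splitting defined on its open Schubert cell $C_w = BwP/P$. Choose a representative $\dot w \in G$ of the Weyl group element $w$. The cell $C_w$ is the orbit $U_w.\dot w P$, where $U_w = U \cap \dot w U^- \dot w^{-1}$ is the subgroup of the unipotent radical $U$ of $B$ spanned by the root subgroups sent to negative roots by $w^{-1}$; moreover the map $U_w \to C_w$, $u \mapsto u.\dot w P$, is an isomorphism of varieties onto the cell. Composing its inverse with the inclusions $U_w \hookrightarrow U \hookrightarrow B$ gives a morphism $s : C_w \to B$; one checks that $s(x).(\dot w P) = x$ on $C_w$. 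Taking $U = C_w$ (dense open in $\Omega$) and $x_0 = \dot w P$ then exhibits $\Omega$ as $B$-split.

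The main obstacle, such as it is, lies in verifying that these candidate splittings are genuine \emph{morphisms} of varieties rather than merely set-theoretic sections, and in pinning down the precise statement that $U_w$ acts simply transitively on the open cell $C_w$ with the orbit map an isomorphism onto the cell. This is where the semisimplicity of $G$ and the standard structure theory of parabolic subgroups enter: one uses that $U^-$ (resp.\ $U_w$) is a product of root subgroups in a fixed order, each isomorphic to $\mathbb{A}^1$, so that $U^-$ (resp.\ $U_w$) is isomorphic as a variety to an affine space, and that the orbit map onto the big cell (resp.\ the Schubert cell) is an isomorphism because its differential at the base point is an isomorphism of tangent spaces and the target cell is smooth. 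Granting these standard facts, the splittings are polynomial in the affine coordinates and everything goes through. Once the first case is established, the second is essentially the same argument applied inside the flag variety $G/B$ and then pushed forward to $G/P$, or alternatively deduced directly by the cell analysis above; I would present both reductions briefly and then carry out the root-subgroup computation once.
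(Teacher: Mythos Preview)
Your proposal is correct and is essentially the same argument as the paper's: in both cases one inverts the isomorphism from a suitable unipotent subgroup to the open cell (the paper cites \cite[Lemma~8.3.6]{springer:linear} for this fact rather than spelling out $U^-$ and $U_w$). The only difference is organizational: the paper proves the Schubert case first and then observes that $X$ is itself a Schubert variety, so $X$ is $B$-split and hence $G$-split, which saves writing out the big-cell argument separately.
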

\begin{proof}
  Let $\Omega \subset X$ be a $B$-stable Schubert variety,
  $\Omega^\circ \subset \Omega$ the $B$-stable open cell, and $x_0 \in
  \Omega^\circ$ any point.  According to e.g.\ \cite[Lemma
  8.3.6]{springer:linear} we can choose a unipotent subgroup $U
  \subset B$ such that the map $U \to \Omega^\circ$ defined by $g
  \mapsto g.x_0$ is an isomorphism.  The inverse of this map is a
  splitting of the $B$-action on $\Omega$.  Since $X$ is a Schubert
  variety, it follows that $X$ is $B$-split and consequently
  $G$-split.
\end{proof}

Recall that a morphism $f : M \to X$ is a {\em locally trivial
  fibration\/} if each point $x \in X$ has an open neighborhood $U
\subset X$ such that $f^{-1}(U) \cong U \times f^{-1}(x)$ and $f$ is
the projection to the first factor.

\begin{prop}\label{prop:loctriv}
  Let $f : M \to X$ be an equivariant map of irreducible
  $G$-varieties.  Assume that $X$ is $G$-split.  Then $f$ is a locally
  trivial fibration over the dense open $G$-orbit in $X$, and the
  fibers over this orbit are irreducible.
\end{prop}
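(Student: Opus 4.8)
The plan is to build the trivialization directly out of a splitting. Fix a splitting $s : U \to G$ of the $G$-action on $X$ together with its base point $x_0 \in U$, so that $U$ is dense open in $X$, $s(x).x_0 = x$ for all $x \in U$, and $U$ is contained in the dense open orbit $G.x_0$. Since $f$ is $G$-equivariant, for $m \in f^{-1}(x_0)$ and $x \in U$ one has $f(s(x).m) = s(x).f(m) = s(x).x_0 = x$, so there is a well-defined morphism
\[
\phi : U \times f^{-1}(x_0) \longrightarrow f^{-1}(U) \,, \qquad (x,m) \longmapsto s(x).m \,,
\]
which intertwines the projection to $U$ on the left with $f$ on the right. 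First I would check that $\phi$ is an isomorphism: the assignment $m' \mapsto \bigl(f(m'),\, s(f(m'))^{-1}.m'\bigr)$ is a morphism $f^{-1}(U) \to U \times f^{-1}(x_0)$ — it is built from $f$, the splitting $s$, group inversion, and the action morphism $G \times M \to M$ — and a short computation shows it is a two-sided inverse of $\phi$. This already exhibits $f$ as a locally trivial fibration over $U$.

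Next I would pass from $U$ to the whole open orbit $G.x_0$. For each $g \in G$ the translate $g.U$ is again dense open, and $s_g(x) := g\, s(g^{-1}.x)\, g^{-1}$ defines a splitting $s_g : g.U \to G$ with base point $g.x_0$; applying the previous paragraph to $s_g$ yields $f^{-1}(g.U) \cong (g.U) \times f^{-1}(g.x_0)$ over $g.U$. Since the opens $g.U$ cover $G.x_0$, the map $f$ is locally trivial over the open orbit, and all fibers over the orbit are $G$-translates of one another, hence abstractly isomorphic to $f^{-1}(x_0)$.

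Finally, for irreducibility: if $f^{-1}(U) = \emptyset$ then $f(M)$, being irreducible and $G$-stable, is disjoint from $G.x_0$ and the claim about fibers over the orbit is vacuous; otherwise $f^{-1}(U) \cong U \times f^{-1}(x_0)$ is a nonempty open subset of the irreducible variety $M$, hence irreducible, and since $U$ is irreducible this forces $f^{-1}(x_0)$ to be irreducible. Translating by elements of $G$, every fiber of $f$ over $G.x_0$ is then irreducible. The only genuinely delicate point is the verification that the candidate inverse of $\phi$ is a morphism of varieties — so that $\phi$ is an isomorphism and not merely a bijection — together with the bookkeeping that the trivializations over the various $g.U$ are all instances of one and the same construction; everything else is formal manipulation with the action map and the splitting.
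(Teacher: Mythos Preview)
Your proof is correct and follows essentially the same approach as the paper: construct the trivialization $\phi(x,m) = s(x).m$ over $U$ with the explicit inverse $m' \mapsto \bigl(f(m'),\, s(f(m'))^{-1}.m'\bigr)$, and deduce irreducibility of the fiber from irreducibility of $f^{-1}(U)$. The paper's version is terser, leaving the extension from $U$ to the full orbit by $G$-translation (and the vacuous case $f^{-1}(U)=\emptyset$) implicit.
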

\begin{proof}
  Let $x_0 \in U \subset X$ and $s : U \to G$ be a splitting of the
  $G$-action on $X$.  Then the map $\varphi : U \times f^{-1}(x_0) \to
  f^{-1}(U)$ defined by $\varphi(x,y) = s(x).y$ is an isomorphism,
  with inverse given by $\varphi^{-1}(m) = (f(m), s(f(m))^{-1}.m)$.
  Since $f^{-1}(U) \cong U \times f^{-1}(x_0)$ is irreducible, so is
  $f^{-1}(x_0)$.
\end{proof}

In the rest of this section, a variety means a reduced scheme of
finite type over an algebraically closed field of characteristic zero.
An irreducible variety $X$ has {\em rational singularities\/} if there
exists a desingularization $\pi : \wt{X} \to X$ such that $\pi_*
\cO_{\wt{X}} = \cO_X$ and $R^i \pi_* \cO_{\wt{X}} = 0$ for all $i>0$.
An arbitrary variety has rational singularities if its irreducible
components have rational singularities, are disjoint, and have the
same dimension.  Zariski's main theorem implies that any variety with
rational singularities is normal.  Notice also that if $X$ and $Y$
have rational singularities, then so does $X \times Y$.  The converse
is a special case of the following lemma of Brion
\cite[Lemma~3]{brion:positivity}.

\begin{lemma}[Brion]\label{lem:ratsingfib}
  Let $Z$ and $S$ be varieties and let $\pi : Z \to S$ be a morphism.
  If $Z$ has rational singularities, then the same holds for the
  general fibers of $\pi$.
\end{lemma}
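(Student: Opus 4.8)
The plan is to reduce to the case where $Z$ and $S$ are irreducible with $\pi$ dominant, fix a resolution of singularities $\rho \cc \wt Z \to Z$, and show that for $s$ in a dense open subset of $S$ the restricted map $\rho_s \cc \wt Z_s \to Z_s$ is a desingularization witnessing that $Z_s$ has rational singularities. For the reduction, note that a general point $s \in S$ lies on a unique irreducible component $S_0$ of $S$, so we may replace $S$ by $S_0$; then $Z_s$ is the disjoint union of the fibers $(Z_i)_s$ over those components $Z_i \subset Z$ dominating $S_0$ (the others contribute empty fibers), and since all components of $Z$ have the same dimension — this is part of the hypothesis that $Z$ has rational singularities — these fibers are disjoint and of the common dimension $\dim Z - \dim S_0$. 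Thus it suffices to treat $Z$, $S$ irreducible and $\pi$ dominant. Choose a resolution $\rho \cc \wt Z \to Z$: here $\wt Z$ is smooth and irreducible, $\rho$ is proper and birational, $\rho_* \cO_{\wt Z} = \cO_Z$, and $R^i \rho_* \cO_{\wt Z} = 0$ for $i > 0$. Applying generic smoothness in characteristic zero to $\pi \circ \rho \cc \wt Z \to S$ yields a dense open $U \subset S$ over which $\pi \circ \rho$ is smooth, so $\wt Z_s$ is smooth for $s \in U$; shrinking $U$ by generic flatness we may also assume $\wt Z \to U$ and $Z \to U$ are flat, hence $\dim \wt Z_s = \dim \wt Z - \dim S$ and $\dim Z_s = \dim Z - \dim S$ for $s \in U$.

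The crux is to arrange that the formation of $\rho_* \cO_{\wt Z}$ and $R^i \rho_* \cO_{\wt Z}$ commutes with passage to fibers over $U$. Since $\wt Z \to U$ is flat and $\rho$ is proper, after further shrinking $U$ we may assume that the coherent sheaves $R^i \rho_* \cO_{\wt Z}$ are flat over $U$ and their formation commutes with arbitrary base change over $U$ — the relative version of the statement that coherent cohomology is eventually free, obtained by combining generic flatness with the theorem on cohomology and base change, which is now applicable because $\cO_{\wt Z}$ is $U$-flat. Using $R^i \rho_* \cO_{\wt Z} = 0$ for $i > 0$ and $\rho_* \cO_{\wt Z} = \cO_Z$, we conclude that for every $s \in U$ one has $R^i (\rho_s)_* \cO_{\wt Z_s} = 0$ for $i > 0$ and $(\rho_s)_* \cO_{\wt Z_s} = \cO_{Z_s}$. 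This step — commuting a resolution of singularities with taking general fibers at the level of higher direct images — is the main technical obstacle; an alternative is to establish everything first at the generic point $\eta \in S$, where flat base change along $\Spec \kappa(\eta) \to S$ produces these identities directly, and then spread the resulting statement ``$Z_\eta$ has rational singularities over $\kappa(\eta)$'' out over a dense open of $S$, using that smoothness, properness, flatness, reducedness, and the vanishing of finitely many coherent sheaves are all constructible conditions.

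It remains to check that for $s$ in a dense open subset of $U$ the map $\rho_s \cc \wt Z_s \to Z_s$ exhibits $Z_s$ as a variety with rational singularities. The morphism $\rho_s$ is proper and surjective, being a base change of $\rho$. The locus $E \subset Z$ where $\rho$ fails to be an isomorphism is closed of dimension $< \dim Z$, so for general $s$ the closed set $E_s \subset Z_s$ is empty or has dimension $< \dim Z_s$; as every component of $Z_s$ has dimension $\dim Z - \dim S$, the set $E_s$ is nowhere dense in $Z_s$, hence $\rho_s$ is an isomorphism over a dense open of $Z_s$. The generic fiber $Z_\eta$ is a localization of the reduced scheme $Z$, hence reduced, and since $\kappa(\eta)$ is perfect in characteristic zero, $Z_\eta$ is geometrically reduced, so $Z_s$ is reduced for $s$ in a dense open. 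Finally, $Z_s$ has disjoint irreducible components of equal dimension: $\wt Z_s$ is smooth, so its connected components $T_1,\dots,T_m$ coincide with its irreducible components and are pairwise disjoint; since $(\rho_s)_* \cO_{\wt Z_s} = \cO_{Z_s}$, the map $\rho_s$ has connected fibers (trivial Stein factorization), so if $\rho_s(T_a)$ and $\rho_s(T_b)$ met for some $a \ne b$, a point of the intersection would have disconnected fiber, a contradiction; thus the $\rho_s(T_a)$ are disjoint irreducible closed sets covering $Z_s$, hence exactly its components, all of dimension $\dim Z - \dim S$. Restricting $R^i (\rho_s)_* \cO_{\wt Z_s} = 0$ for $i > 0$ and $(\rho_s)_* \cO_{\wt Z_s} = \cO_{Z_s}$ to each $\rho_s(T_a)$ shows that $T_a \to \rho_s(T_a)$ is a desingularization with the required cohomological vanishing, so every component $\rho_s(T_a)$ has rational singularities. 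By the definition recalled before the lemma, $Z_s$ then has rational singularities, which completes the argument.
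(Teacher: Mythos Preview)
The paper does not prove this lemma; it is quoted as Brion's result with a citation to \cite[Lemma~3]{brion:positivity}. Your proof is correct and follows the standard line of argument (which is also essentially Brion's): fix a resolution, use generic smoothness to make the fibres of the resolution smooth, and use a base-change argument to transport the vanishing of $R^i\rho_*\cO_{\wt Z}$ to the fibres.

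The step you correctly flag as the crux --- commuting $R^i\rho_*$ with restriction to $Z_s$ --- can be made precise as follows. Once both $\wt Z$ and $Z$ are flat over the open set $U\subset S$, the Cartesian square
\[
\xymatrix{
\wt Z_s \ar[r] \ar[d]_{\rho_s} & \wt Z \ar[d]^{\rho} \\
Z_s \ar[r] & Z
}
\]
is tor-independent: by flatness of $Z$ over $U$ one has $\cO_{Z_s}\simeq\cO_Z\otimes^L_{\cO_S}\kappa(s)$, and then $\cO_{\wt Z}\otimes^L_{\cO_Z}\cO_{Z_s}\simeq\cO_{\wt Z}\otimes^L_{\cO_S}\kappa(s)\simeq\cO_{\wt Z_s}$ by flatness of $\wt Z$ over $U$. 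Derived base change therefore gives $R\rho_{s*}\cO_{\wt Z_s}\simeq L\iota_s^*R\rho_*\cO_{\wt Z}\simeq L\iota_s^*\cO_Z\simeq\cO_{Z_s}$, which unpacks to exactly the two conditions you need. Your alternative route through the generic fibre and spreading out (via constructibility of the relevant loci) is equally valid and is closer to Brion's own phrasing. The remaining verifications --- smoothness of $\wt Z_s$, that $\rho_s$ is an isomorphism away from a nowhere-dense closed set, reducedness of $Z_s$, and the disjointness and equidimensionality of its components via connectedness of the fibres of $\rho_s$ --- are all handled correctly.
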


The following generalization of the Kleiman-Bertini theorem
\cite{kleiman:transversality} was proved by Brion in
\cite[Lemma~2]{brion:positivity} when $p$ and $q$ are inclusions and
$Y$ is a Schubert variety.  We adapt his proof to our case.

\begin{thm}\label{thm:KBratsings}
  Let $G$ be a connected algebraic group and let $X$ be a split and
  transitive $G$-variety.  Let $p : Y \to X$ and $q : Z \to X$ be
  morphisms of varieties, and assume that $Y$ and $Z$ have rational
  singularities.  Then $g.Y \times_X Z$ has rational singularities for
  all points $g$ in a dense open subset of $G$.
\end{thm}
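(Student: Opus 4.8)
The plan is to realize all the varieties $g.Y\times_X Z$ simultaneously as the fibers of a single family over $G$ whose total space has rational singularities, and then to conclude with Brion's Lemma~\ref{lem:ratsingfib}. Write $\beta\colon G\times Y\to X$ for the map $(g,y)\mapsto g\cdot p(y)$, and set
\[
\mathcal W \;=\; (G\times Y)\times_X Z \;=\; \{(g,y,z)\in G\times Y\times Z : g\cdot p(y)=q(z)\},
\]
with its projection $\rho\colon \mathcal W\to G$. The scheme-theoretic fiber $\rho^{-1}(g)$ is exactly $g.Y\times_X Z$, so once we know that $\mathcal W$ has rational singularities, Lemma~\ref{lem:ratsingfib} applied to $\rho$ produces a dense open $V\subset G$ over which the fibers have rational singularities (a fiber that happens to be empty qualifies vacuously), which is precisely the assertion of the theorem.

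To prove that $\mathcal W$ has rational singularities I would first show that $\beta$ is a Zariski-locally trivial fibration over $X$ — this is the one step that genuinely uses the hypothesis that $X$ is split. If $s\colon U\to G$ with basepoint $x_0\in U$ is a splitting, then, writing $F=\beta^{-1}(x_0)$, the map $(x,(g,y))\mapsto (s(x)g,y)$ is an isomorphism $U\times F\to \beta^{-1}(U)$, with inverse $(g,y)\mapsto(\beta(g,y),\,(s(\beta(g,y))^{-1}g,\,y))$; since $X$ is a \emph{transitive} $G$-variety and $\beta$ is $G$-equivariant, the $G$-translates of $U$ cover $X$ and this trivialization propagates, so $\beta$ is locally trivial over all of $X$ with fiber $F$. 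Now $G\times Y$ has rational singularities, because $G$ is smooth (hence has rational singularities), $Y$ has rational singularities, and a product of varieties with rational singularities has rational singularities. Hence by Lemma~\ref{lem:ratsingfib} the general fiber of $\beta$ has rational singularities, and since all fibers of $\beta$ are isomorphic to $F$, the variety $F$ has rational singularities. Base-changing $\beta$ along $q\colon Z\to X$ then exhibits $\mathcal W\to Z$ as a Zariski-locally trivial fibration with the same fiber $F$: if $X$ is covered by opens $X_i$ over which $\beta$ is trivial, then $Z$ is covered by the opens $q^{-1}(X_i)$ with $\mathcal W|_{q^{-1}(X_i)}\cong q^{-1}(X_i)\times F$. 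Each such open of $Z$ has rational singularities, so its product with $F$ does, and since rational singularities is a local property, $\mathcal W$ has rational singularities, completing the argument via the first paragraph.

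I expect the local triviality of $\beta$ to be the main point; everything after it is formal bookkeeping with Lemma~\ref{lem:ratsingfib} and the stability of rational singularities under products and under passage to open subsets. Two small care points: Lemma~\ref{lem:ratsingfib} is being applied here to the possibly reducible spaces $G\times Y$ and $\mathcal W$, so one should work throughout with the convention fixed just before that lemma — an arbitrary variety has rational singularities precisely when its irreducible components are disjoint, equidimensional, and individually have rational singularities — a condition that the product and fiber-bundle structures above preserve componentwise; and $\beta$ is indeed surjective, because $G$ acts transitively on $X$, so that $F$ is nonempty whenever $Y$ is, which is all that is needed.
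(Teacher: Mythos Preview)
Your proof is correct and follows essentially the same approach as the paper: form the family $\mathcal W=(G\times Y)\times_X Z$, observe that the action map $\beta$ is a locally trivial fibration (the paper cites Proposition~\ref{prop:loctriv} for this, using transitivity implicitly so that the dense orbit is all of $X$), deduce that $\mathcal W\to Z$ is locally trivial with fiber having rational singularities, hence $\mathcal W$ has rational singularities, and finish by applying Lemma~\ref{lem:ratsingfib} to $\mathcal W\to G$. Your write-up is a bit more explicit about the trivialization and about the reducible case, but the argument is the same.
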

\begin{proof}
  It follows from Proposition~\ref{prop:loctriv} that the map $m : G
  \times Y \to X$ defined by $m(g,y) = g.p(y)$ is a locally trivial
  fibration.  Set $Q = (G\times Y) \times_X Z$ and consider the
  diagram:
  \[ \xymatrix{ Q \ar[r] \ar[d] & G \times Y \ar[r]_{\text{pr}_1}
    \ar[d]^{m} & G \\
    Z \ar[r] & X }
  \]
  Since $G \times Y$ has rational singularities, it follows from
  Lemma~\ref{lem:ratsingfib} that $m^{-1}(x)$ has rational
  singularities for $x \in X$.  Since $m$ is a locally trivial
  fibration, so is the map $Q \to Z$, hence the assumption that $Z$
  has rational singularities implies that $Q$ has rational
  singularities.  Finally, Lemma~\ref{lem:ratsingfib} applied to the
  map $Q \to G$ implies that $g.Y \times_X Z$ has rational
  singularities for all points $g$ in a dense open subset of $G$.
\end{proof}

In the situation of Theorem~\ref{thm:KBratsings}, notice that if the
map $p : Y \to X$ is $G$-equivariant, then the isomorphism class of
$g.Y \times_X Z$ is independent of $g$.  It follows that $Y \times_X
Z$ has rational singularities.

\subsection{Rationality}

Before we continue, we recall some rationality properties of algebraic
varieties that are required in later sections.  An algebraic variety
$X$ is called {\em rational\/} if it is birationally equivalent to a
projective space $\bP^n$.  It is called {\em unirational\/} if there
exists a dominant morphism $U \to X$ where $U$ is an open subset of
$\bP^n$; here $n$ is allowed to be greater than the dimension of $X$,
but in such cases one can replace $\bP^n$ with a linear subspace to
obtain a generically finite map from $U$ to $X$.  Finally, $X$ is said
to be {\em rationally connected\/} if a general pair of points $(x,y)
\in X \times X$ can be joined by a rational curve, i.e.\ both $x$ and
$y$ belong to the image of some morphism $\bP^1 \to X$.  Rational
implies unirational, which in turn implies rational connectivity when
$X$ is complete.  Notice also that any rationally connected variety is
irreducible.  The relevance of these notions to our study of quantum
$K$-theory originates in the fact that, if $X$ is any rationally
connected non-singular projective variety, then $H^i(X,\cO_X) = 0$ for
all $i>0$ \cite[Cor.\ 4.18(a)]{debarre:higher-dimensional}, hence the
sheaf Euler characteristic of $X$ is equal to one.  In addition,
rational connectivity is one of the hypotheses needed in
Proposition~\ref{prop:gysin} below.  The following result from
\cite{graber.harris.ea:families} provides an important tool for
proving that a variety is rationally connected.

\begin{thm}[Graber, Harris, Starr]\label{thm:ratconn}
  Let $f : X \to Y$ be any dominant morphism of complete irreducible
  complex varieties.  If $Y$ and the general fiber of $f$ are
  rationally connected, then $X$ is rationally connected.
\end{thm}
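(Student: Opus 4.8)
Since this is a deep theorem of Graber, Harris, and Starr, the plan is to reconstruct the architecture of their argument. The first move reduces the statement to the case where the base is a curve. In characteristic zero an irreducible complete variety is rationally connected as soon as two of its general points lie on a common rational curve (Koll\'ar--Miyaoka--Mori), so it suffices to join two general points $x_1,x_2 \in X$ by a rational curve in $X$. Put $y_i = f(x_i)$; since $f$ is dominant, $x_i$ is a general point of the fibre $X_{y_i} := f^{-1}(y_i)$, and $y_1,y_2$ lie in the dense open subset $Y^\circ \subseteq Y$ over which the fibres of $f$ are rationally connected. As $Y$ is rationally connected we may pick $g : \bP^1 \to Y$ with $g(0)=y_1$ and $g(\infty)=y_2$; forming the fibre product $\mathcal{X} = X \times_Y \bP^1$, passing to the component that dominates $\bP^1$, and resolving its singularities yields a morphism $\pi : \mathcal{X} \to \bP^1$ from a smooth projective variety whose general fibre is rationally connected and whose fibres over $0$ and $\infty$ contain points lying over $x_1$ and $x_2$ through the natural map $\mathcal{X} \to X$. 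A section $\sigma$ of $\pi$ with $\sigma(0)$ and $\sigma(\infty)$ lying over $x_1$ and $x_2$ then gives, after composition with $\mathcal{X}\to X$, a rational curve in $X$ through $x_1$ and $x_2$, which is what we want.

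So everything comes down to the genuine Graber--Harris--Starr statement: a morphism $\pi$ from a smooth projective variety to a smooth projective curve $C$ whose general fibre is rationally connected admits a section, and one may moreover prescribe the section's value in finitely many general fibres. The approach here is to start from a \emph{multisection} $D\subseteq \mathcal{X}$ — for instance a general complete intersection of very ample divisors — which is finite over $C$, and to build from it a curve that deforms into a section. The essential input is that the rationally connected general fibres of $\pi$ are swept out by \emph{very free} rational curves, which is proved by Mori's bend-and-break technique. One attaches to $D$ a ``comb'' whose teeth are chains of such very free rational curves lying inside the fibres of $\pi$, chosen so that the comb is combinatorially connected, passes through the prescribed points, and — crucially — has a normal bundle positive enough that the $H^1$ of a suitable twist vanishes. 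This positivity makes the comb unobstructed, so it smooths, and it makes the parameter space of the smoothings large and well behaved; a degeneration argument inside that space then produces, in the limit, a curve one of whose components is a section of $\pi$ meeting the prescribed points. Throughout, properness of the Kontsevich space of stable maps (or of the relevant Hilbert scheme) is used to pass to limits.

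The step I expect to be the main obstacle is precisely this last one: extracting enough positivity of normal bundles from the hypothesis that merely the \emph{general} fibre is rationally connected, while bookkeeping the finitely many ``bad'' fibres of $\pi$. The standard remedy is to work first over an affine open $C^\circ\subseteq C$, away from the bad fibres and containing the finitely many prescribed fibres, where the very free teeth are available and the smoothing and degeneration can be carried out, and then to invoke properness of the space of stable maps to obtain a limiting section over all of $C$. This compactness-and-positivity analysis is the technical heart of \cite{graber.harris.ea:families}; granting it, the reductions above are routine, and composing the resulting section with $\mathcal{X}\to X$ completes the proof that $X$ is rationally connected.
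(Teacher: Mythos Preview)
The paper does not prove this theorem at all: it is simply quoted from \cite{graber.harris.ea:families} as a black box and then applied in the proof of Theorem~\ref{thm:rat3bdgw}. There is therefore nothing to compare your argument against.

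That said, your outline is a faithful high-level summary of the Graber--Harris--Starr strategy: reduce to a one-parameter family over $\bP^1$ by connecting $f(x_1)$ and $f(x_2)$ with a rational curve in $Y$, then produce a section of the resulting family using the comb-smoothing technique with very free teeth in the fibres. One small point: rather than invoking the stronger statement that the section can be forced through prescribed points of the fibres over $0$ and $\infty$ (which, after base change and resolution, need not be ``general'' fibres of $\pi$), it is cleaner to take any section $\sigma$ guaranteed by the basic GHS theorem and then connect $\sigma(0)$ to $x_1$ inside the rationally connected fibre $X_{y_1}$, and likewise at $\infty$; the resulting chain of rational curves already shows that $x_1$ and $x_2$ are rationally chain connected, hence rationally connected in characteristic zero. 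This avoids the extra bookkeeping you flag in your last paragraph. But none of this is needed for the present paper, which only uses the statement.
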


\section{Geometry of Gromov-Witten varieties}\label{sec:gwvar}

Let $X = G/P$ be a homogeneous space, where $G$ is any semisimple
complex linear algebraic group and $P$ a parabolic subgroup.  Given an
effective class $d \in H_2(X;\Z)$ and an integer $n \geq 0$, the
Kontsevich moduli space $\Mb_{0,n}(X,d)$ parametrizes the set of all
$n$-pointed stable genus-zero maps $f : C \to X$ with $f_*[C] = d$,
and is equipped with a total evaluation map $\ev = (\ev_1,\dots,\ev_n)
: \Mb_{0,n}(X,d) \to X^n := X \times \dots \times X$.  A detailed
construction of this space can be found in the survey
\cite{fulton.pandharipande:notes}.  The space $\Mb_{0,n}(X,d)$ is a
projective variety with rational singularities, and it was proved by
Kim and Pandharipande that this variety is also rational
\cite{kim.pandharipande:connectedness}.

\begin{cor}\label{cor:gwratsing}
  Let $\Omega_1, \dots, \Omega_n \subset X$ be Schubert varieties of
  $X$ in general position.  Then the Gromov-Witten variety
  $\ev^{-1}(\Omega_1 \times \dots \times \Omega_n) \subset
  \Mb_{0,n}(X,d)$ has rational singularities.
\end{cor}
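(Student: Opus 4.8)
The plan is to deduce this corollary from the relative Kleiman-Bertini theorem proved above, namely Theorem~\ref{thm:KBratsings}, applied to the total evaluation map on the Kontsevich space. First I would recall that $\Mb_{0,n}(X,d)$ has rational singularities (stated in the paragraph preceding the corollary) and that $X^n$ is a homogeneous space under $G^n$, hence split and transitive by Proposition~\ref{prop:split}. The total evaluation map $\ev : \Mb_{0,n}(X,d) \to X^n$ plays the role of $q : Z \to X$ in Theorem~\ref{thm:KBratsings}, with $G^n$ acting transitively on $X^n$.

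Next I would take the inclusion of a product of Schubert varieties as the map $p : Y \to X^n$. Concretely, fix $B$-stable Schubert varieties $\Omega_1^\circ, \dots, \Omega_n^\circ \subset X$; each $\Omega_i^\circ$ has rational singularities (Schubert varieties are well known to have rational singularities, and one may cite this or note it follows since $\Mb_{0,1}(X,0) \cong X$ and Corollary~\ref{cor:gwratsing} for $n=1$, $d=0$ reduces to the Schubert variety case — so in fact one should simply invoke the standard fact that Schubert varieties have rational singularities). Then $Y = \Omega_1^\circ \times \dots \times \Omega_n^\circ \subset X^n$ has rational singularities, being a product of varieties with rational singularities. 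Applying Theorem~\ref{thm:KBratsings} with $G^n$ acting on $X^n$, the fiber product $g.Y \times_{X^n} \Mb_{0,n}(X,d)$ has rational singularities for all $g$ in a dense open subset of $G^n$. Since $G^n$ acts transitively on $X^n$ and moves Schubert varieties to translates in general position, the variety $g.Y \times_{X^n} \Mb_{0,n}(X,d)$ is precisely $\ev^{-1}(\Omega_1 \times \dots \times \Omega_n)$ for Schubert varieties $\Omega_i = g_i.\Omega_i^\circ$ in general position. This completes the argument.

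The main obstacle is purely a matter of bookkeeping rather than substance: one must be careful that ``general position'' of Schubert varieties in $X$ is the same notion as ``$g$ in a dense open subset of $G^n$'' in the Kleiman-Bertini conclusion. This is standard — every Schubert variety is a $B$-translate of a fixed one, and the $G^n$-orbit of the tuple $(\Omega_1^\circ, \dots, \Omega_n^\circ)$ in the relevant parameter space is dense, so a dense open set of $g \in G^n$ yields the generic configurations. One should also note that the fiber product $g.Y \times_{X^n} \Mb_{0,n}(X,d)$ is canonically identified with the scheme-theoretic preimage $\ev^{-1}(g.Y)$, which is how the Gromov-Witten variety is defined. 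No genuinely hard step arises; the content is entirely contained in Theorem~\ref{thm:KBratsings}.
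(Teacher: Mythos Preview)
Your proposal is correct and matches the paper's own proof essentially verbatim: the paper applies Theorem~\ref{thm:KBratsings} to the inclusion $\Omega_1 \times \dots \times \Omega_n \hookrightarrow X^n$ and the evaluation map $\ev : \Mb_{0,n}(X,d) \to X^n$, using that the componentwise $G^n$-action on $X^n$ is transitive and split. Your write-up simply spells out the details (and your brief aside deriving rational singularities of Schubert varieties from the $n=1$, $d=0$ case of the corollary itself is circular, as you noticed; just cite the standard fact directly).
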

\begin{proof}
  Since the component-wise action of $G^n$ on $X^n$ is transitive and
  split, this result follows by applying Theorem~\ref{thm:KBratsings}
  to the inclusion $\Omega_1 \times \dots \times \Omega_n \subset X^n$
  and the evaluation map $\ev : \Mb_{0,n}(X,d) \to X^n$.
\end{proof}

The following proposition will be used to show that one-point and
two-point Gromov-Witten varieties are unirational.

\begin{prop}\label{prop:rat2gw}
  Let $M$ be a unirational $G$-variety and let $f_1 : M \to X$ and
  $f_2 : M \to X$ be $G$-equivariant maps.  Let $\Omega_1, \Omega_2
  \subset X$ be opposite Schubert varieties.\smallskip

  \noin{\em(a)} The variety $f_1^{-1}(\Omega_1) \subset M$ is
  unirational.\smallskip

  \noin{\em(b)} The image $\wt\Omega = f_2(f_1^{-1}(\Omega_1)) \subset
  X$ is a Schubert variety in $X$, and the map $f_2 :
  f_1^{-1}(\Omega_1) \to \wt\Omega$ is a locally trivial fibration
  over the open cell $\wt\Omega^\circ \subset \wt\Omega$.\smallskip

  \noin{\em(c)} The intersection $f_1^{-1}(\Omega_1) \cap
  f_2^{-1}(\Omega_2) \subset M$ is either empty or unirational.
\end{prop}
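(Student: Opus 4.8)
The plan is to prove the three parts in order, using that a unirational $G$-variety admits a dominant map from an open subset of projective space, together with Proposition~\ref{prop:loctriv} applied to the $G$-split varieties $X$ and $\wt\Omega$.

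For part (a), the key observation is that $f_1 : M \to X$ is $G$-equivariant and $X$ is $G$-split (Proposition~\ref{prop:split}), so by Proposition~\ref{prop:loctriv} the map $f_1$ is a locally trivial fibration over the dense open $G$-orbit $X^\circ = X$, in particular an open subset of $M$ is isomorphic to a product $U \times F$ with $F = f_1^{-1}(x_0)$. Now I would intersect with $\Omega_1$: since $\Omega_1$ is a Schubert variety, its open cell $\Omega_1^\circ$ is contained in the big cell of $X$ opposite to the appropriate Borel, and in particular meets the dense locus over which $f_1$ is trivial. Over $\Omega_1^\circ$ the restriction $f_1^{-1}(\Omega_1^\circ) \to \Omega_1^\circ$ is therefore a locally trivial fibration with fiber $F$, so $f_1^{-1}(\Omega_1^\circ) \cong \Omega_1^\circ \times F$. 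Since $\Omega_1^\circ$ is an affine space and $F$ is a fiber of a map from the unirational variety $M$, one checks that $F$ is unirational (a general fiber is unirational because $M$ is, and the locally trivial structure shows all fibers over $X$ are isomorphic). Hence $f_1^{-1}(\Omega_1^\circ)$ is unirational, and since it is dense in $f_1^{-1}(\Omega_1)$, the latter is unirational too.

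For part (b), the map $f_2 : f_1^{-1}(\Omega_1) \to X$ is $B$-equivariant when we choose the Borel $B$ with $B.x_0 = \Omega_1^\circ$ (so $f_1^{-1}(\Omega_1)$ is $B$-stable), hence its image $\wt\Omega = f_2(f_1^{-1}(\Omega_1))$ is a $B$-stable irreducible closed subset of $X$, i.e.\ a Schubert variety. Applying Proposition~\ref{prop:loctriv} to the $B$-equivariant map $f_2 : f_1^{-1}(\Omega_1) \to \wt\Omega$ and the fact that $\wt\Omega$ is $B$-split (Proposition~\ref{prop:split}) shows that $f_2$ is a locally trivial fibration over the dense open $B$-orbit $\wt\Omega^\circ$, which is the open cell.

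For part (c), combine (a) and (b): by (b), over the open cell $\wt\Omega^\circ$ we have $f_1^{-1}(\Omega_1) \cap f_2^{-1}(\wt\Omega^\circ) \cong \wt\Omega^\circ \times F'$ for some fiber $F'$, and $F'$ is unirational because $f_1^{-1}(\Omega_1)$ is unirational by (a) and $F'$ is a general fiber of the map $f_2$ restricted to it (all fibers over $\wt\Omega^\circ$ being isomorphic by local triviality). Now intersect with $\Omega_2$: if $\Omega_2 \cap \wt\Omega^\circ = \emptyset$, then since $\Omega_2$ is the opposite Schubert variety its intersection with the Schubert variety $\wt\Omega$ is either empty or — if nonempty — meets $\wt\Omega^\circ$ by the standard fact that opposite Schubert varieties intersect properly and a nonempty intersection $\Omega_2 \cap \wt\Omega$ has dense intersection with $\wt\Omega^\circ$. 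In the empty case $f_1^{-1}(\Omega_1) \cap f_2^{-1}(\Omega_2) = \emptyset$; in the nonempty case, $f_2^{-1}(\Omega_2 \cap \wt\Omega^\circ)$ is dense in $f_1^{-1}(\Omega_1) \cap f_2^{-1}(\Omega_2)$ and is isomorphic to $(\Omega_2 \cap \wt\Omega^\circ) \times F'$ via the local triviality, which is unirational since $\Omega_2 \cap \wt\Omega^\circ$ is (locally closed in an affine space, hence rational) and $F'$ is unirational. Therefore $f_1^{-1}(\Omega_1) \cap f_2^{-1}(\Omega_2)$ is unirational.

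The main obstacle I anticipate is the bookkeeping in part (c) around whether $\Omega_2$ meets the open cell $\wt\Omega^\circ$: one must use that $\Omega_1$ and $\Omega_2$ are \emph{opposite} Schubert varieties (with respect to opposite Borels $B$ and $B^-$), so that $\wt\Omega$, being $B$-stable, has the property that its intersection with the $B^-$-stable $\Omega_2$ is either empty or generically transverse along $\wt\Omega^\circ$. Verifying that this genericity is automatic for opposite Schubert varieties — rather than merely for generic translates — is the delicate point, and it is exactly where the hypothesis that $\Omega_1,\Omega_2$ are opposite (not just in general position) is used.
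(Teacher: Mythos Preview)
Your arguments for parts (a) and (b) match the paper's essentially verbatim: Proposition~\ref{prop:loctriv} gives local triviality of $f_1$, hence unirational fibers, hence unirationality of $f_1^{-1}(\Omega_1)$; then $B$-equivariance and $B$-splitness of the Schubert variety $\wt\Omega$ give (b).

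Part (c) has a genuine gap. You assert that $f_2^{-1}(\Omega_2 \cap \wt\Omega^\circ)$ is dense in $f_1^{-1}(\Omega_1) \cap f_2^{-1}(\Omega_2)$, but you give no justification, and this does not follow from what you have. The map $f_2 : f_1^{-1}(\Omega_1) \to \wt\Omega$ is only locally trivial over $\wt\Omega^\circ$; over the boundary $\wt\Omega \smallsetminus \wt\Omega^\circ$ the fibre dimension may jump, so nothing prevents an extra irreducible component of $f_1^{-1}(\Omega_1)\cap f_2^{-1}(\Omega_2)$ sitting entirely over $(\wt\Omega \smallsetminus \wt\Omega^\circ)\cap\Omega_2$. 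Your anticipated obstacle (whether $\Omega_2$ meets $\wt\Omega^\circ$) is a standard fact about Richardson varieties and is not the issue; the real issue is irreducibility of the total intersection upstairs in $M$.

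The paper handles this as follows. Since $\wt\Omega$ is normal and $f_1^{-1}(\Omega_1)$ is irreducible, Stein factorization shows that \emph{all} fibres of $f_2 : f_1^{-1}(\Omega_1) \to \wt\Omega$ are connected. Separately, the Kleiman--Bertini theorem (Remark~7 in \cite{kleiman:transversality}) shows that $f_1^{-1}(\Omega_1)\cap f_2^{-1}(\Omega_2)$ is locally irreducible. Since the base Richardson variety $\wt\Omega\cap\Omega_2$ is irreducible and the fibres over it are connected, the intersection is connected; locally irreducible plus connected gives irreducible. Now your product argument over $\wt\Omega^\circ\cap\Omega_2$ finishes the proof. (Incidentally, your claim that $\Omega_2\cap\wt\Omega^\circ$ is rational because it is ``locally closed in an affine space'' is false reasoning---an affine elliptic curve is a counterexample---though the conclusion is correct because Richardson varieties are rational.)
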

\begin{proof}
  It follows from Proposition~\ref{prop:loctriv} that $f_1$ is a
  locally trivial fibration, so the assumption that $M$ is unirational
  implies that the fibers of $f_1$ are unirational.  Part (a) follows
  from this because $f_1 : f_1^{-1}(\Omega_1) \to \Omega_1$ is also
  locally trivial.  Choose a Borel subgroup $B \subset G$ such that
  $\Omega_1$ is $B$-stable and $\Omega_2$ is $B^{\text{op}}$-stable.
  Then $f_1^{-1}(\Omega_1)$ and $\wt\Omega$ are $B$-stable, and since
  $\wt\Omega$ is also closed in $X$ and irreducible, it is a Schubert
  variety.  Part (b) now follows from Proposition~\ref{prop:loctriv}
  because $f_2 : f_1^{-1}(\Omega_1) \to \wt\Omega$ is $B$-equivariant.
  Parts (a) and (b) imply that the fibers of $f_2 : f_1^{-1}(\Omega_1)
  \to \wt\Omega$ over $\wt\Omega^\circ$ are unirational.  Since
  $\wt\Omega$ is normal, it follows using Stein factorization that all
  fibers of $f_2 : f_1^{-1}(\Omega_1) \to \wt\Omega$ are connected.
  If $f_1^{-1}(\Omega_1) \cap f_2^{-1}(\Omega_2) \neq \emptyset$, then
  the Kleiman-Bertini theorem \cite[Rmk.~7]{kleiman:transversality}
  implies that this intersection is locally irreducible.  Part (c)
  follows from this, using that the Richardson variety $\wt\Omega \cap
  \Omega_2$ is rational.
\end{proof}

\begin{cor}\label{cor:gw2unirat}
  {\em(a)} Let $\Omega \subset X$ be a Schubert variety and $n \geq
  1$.  Then $\ev_1^{-1}(\Omega) \subset \Mb_{0,n}(X,d)$ is
  unirational.

  {\em(b)} Let $\Omega_1, \Omega_2 \subset X$ be opposite Schubert
  varieties and $n\geq 2$.  Then the two-point Gromov-Witten variety
  $\ev_1^{-1}(\Omega_1) \cap \ev_2^{-1}(\Omega_2) \subset
  \Mb_{0,n}(X,d)$ is either empty or unirational.
\end{cor}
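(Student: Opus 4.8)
The plan is to derive Corollary~\ref{cor:gw2unirat} directly from Proposition~\ref{prop:rat2gw} by taking $M = \Mb_{0,n}(X,d)$ and the $f_i$ to be the evaluation maps $\ev_i$. The first observation to record is that the hypotheses of Proposition~\ref{prop:rat2gw} are met: $\Mb_{0,n}(X,d)$ is irreducible, it carries a natural $G$-action for which each $\ev_i$ is $G$-equivariant, and it is rational by the theorem of Kim and Pandharipande quoted above, hence in particular unirational. So the proposition applies verbatim.

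For part (a), given a Schubert variety $\Omega \subset X$ and $n \geq 1$, I would fix a Borel subgroup $B \subset G$ making $\Omega$ a $B$-stable Schubert variety, choose any opposite Schubert variety $\Omega_2$ (for instance $\Omega_2 = X$, or any $B^{\mathrm{op}}$-stable Schubert variety), and apply Proposition~\ref{prop:rat2gw}(a) with $f_1 = \ev_1$ to conclude that $\ev_1^{-1}(\Omega) \subset \Mb_{0,n}(X,d)$ is unirational. One small point to note is that the statement of Proposition~\ref{prop:rat2gw} is phrased for the $G$-translate of $\Omega$, but since $\Omega$ is already $B$-stable and the construction only uses one opposite pair, the conclusion applies to $\Omega$ itself; alternatively, unirationality is preserved under the $G$-action on $\Mb_{0,n}(X,d)$, so the translate and $\Omega$ give isomorphic subvarieties.

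For part (b), with $\Omega_1, \Omega_2 \subset X$ opposite Schubert varieties and $n \geq 2$, I would apply Proposition~\ref{prop:rat2gw}(c) with $f_1 = \ev_1$ and $f_2 = \ev_2$, both of which are $G$-equivariant maps $\Mb_{0,n}(X,d) \to X$, to conclude that $\ev_1^{-1}(\Omega_1) \cap \ev_2^{-1}(\Omega_2)$ is either empty or unirational. That is the entire content of the two-point Gromov-Witten variety statement.

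I do not anticipate a genuine obstacle here: the corollary is essentially a specialization of the preceding proposition, and the only thing requiring care is checking the input hypotheses — chiefly that $\Mb_{0,n}(X,d)$ is an irreducible unirational $G$-variety with equivariant evaluation maps — which is all recorded in the paragraph preceding Corollary~\ref{cor:gwratsing}. If anything, the mild subtlety worth a sentence is the passage between a Schubert variety and its $G$-translate in part (a), which is handled by equivariance of the $G$-action on the Kontsevich space.
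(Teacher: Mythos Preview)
Your proposal is correct and matches the paper's own proof, which is the single sentence ``This follows from parts (a) and (c) of Proposition~\ref{prop:rat2gw}.'' The only superfluous wrinkle is your remark about $G$-translates: Proposition~\ref{prop:rat2gw} is stated for $\Omega_1$ itself (not a translate), so no such adjustment is needed.
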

\begin{proof}
  This follows from parts (a) and (c) of
  Proposition~\ref{prop:rat2gw}.
\end{proof}

\begin{remark}
  Proposition~\ref{prop:loctriv} can also be used to prove
  unirationality of certain $3$-point Gromov-Witten varieties.  A
  result of Popov \cite{popov:generically} shows that the diagonal
  action of $G$ on $X^3$ has a dense open orbit if and only if $X$ is
  a cominuscule variety, and it is natural to ask if the action is
  also split.  It turns out that a splitting can be constructed when
  $X$ is a Grassmann variety of type A or a maximal orthogonal
  Grassmannian, but no splitting exists for a Lagrangian Grassmannian.
  When $X^3$ is $G$-split, it follows from
  Proposition~\ref{prop:loctriv} that $\ev^{-1}(x,y,z) \subset
  \Mb_{0,3}(X,d)$ is either empty or unirational for all points
  $(x,y,z)$ in the dense open $G$-orbit of $X^3$.  This partially
  recovers results from \cite{buch.mihalcea:quantum,
    chaput.perrin:rationality} asserting that 3-point Gromov-Witten
  varieties are rational for all cominuscule varieties (see
  Theorem~\ref{thm:rat3gw} below).  It is interesting to note that
  Lagrangian Grassmannians also required special treatment in
  \cite{chaput.perrin:rationality}.
\end{remark}

\begin{remark}
  Jason Starr reports that the results of
  \cite{jong.starr.ea:families} can be used to prove the following
  statement.  If $P \subset G$ is a maximal parabolic subgroup and $d$
  is sufficiently large, then $\ev^{-1}(x,y,z) \subset \Mb_{0,3}(X,d)$
  is rationally connected for all points $(x,y,z)$ in a dense open
  subset of $X^3$.
\end{remark}

Our applications require generalizations of Corollaries
\ref{cor:gwratsing} and \ref{cor:gw2unirat} to Gromov-Witten varieties
of stable maps with reducible domains.  Let $\bd =
(d_0,d_1,\dots,d_r)$ be a sequence of effective classes $d_i \in
H_2(X;\Z)$, let $\be = (e_0,\dots,e_r) \in \N^{r+1}$, and set
$|\bd|=\sum d_i$ and $|\be|=\sum e_i$.  We consider stable maps $f : C
\to X$ in $\Mb_{0,|\be|}(X,|\bd|)$ defined on a chain $C$ of $r+1$
projective lines, such that the $i$-th projective line contains $e_i$
marked points (numbered from $1+\sum_{j<i}e_i$ to $\sum_{j\leq i}e_i$)
and the restriction of $f$ to this component has degree $d_i$.  To
ensure that such a map is indeed stable, we demand that $e_i \geq 1 +
\delta_{i,0} + \delta_{i,r}$ whenever $d_i=0$.

Let $M_{\bd,\be} \subset \Mb_{0,|\be|}(X,|\bd|)$ be the closure of the
locus of all such stable maps.  This variety can also be defined
inductively as follows.  If $r=0$ then set $M_{d_0,e_0} =
\Mb_{0,e_0}(X,d_0)$.  Otherwise set $\bd' = (d_0, \dots, d_{r-1})$ and
$\be' = (e_0, \dots, e_{r-2}, e_{r-1}+1)$, and define $M_{\bd,\be}$ as
the product over $X$ of the maps $\ev_{|\be'|} : M_{\bd',\be'} \to X$
and $\ev_1 : M_{d_r,e_r+1} \to X$.  Given subvarieties $\Omega_1,
\dots, \Omega_m$ of $X$ with $m \leq |\be|$, define a boundary
Gromov-Witten variety by $M_{\bd,\be}(\Omega_1,\dots,\Omega_m) =
\bigcap_{i=1}^m \ev_i^{-1}(\Omega_i) \subset M_{\bd,\be}$.  The
varieties $\Omega_i$ will often, but not always, be chosen in general
position.

We also define the varieties $\cZ_{\bd,\be}(\Omega_1,\dots,\Omega_m) =
\ev(M_{\bd,\be}(\Omega_1,\dots,\Omega_m)) \subset X^{|\be|}$ and
$\Gamma_{\bd,\be}(\Omega_1,\dots,\Omega_m) =
\ev_{|\be|}(M_{\bd,\be}(\Omega_1,\dots,\Omega_m)) \subset X$.  If no
sequence $\be$ is specified, we will use $\be = (3)$ when $r=0$ and
$\be = (2,0,\dots,0,1)$ when $r>0$; this convention will only be used
when $d_i \neq 0$ for $i>0$.  For example, if $x,y \in X$ and $d \in
H_2(X;\Z)$, then $\Gamma_d(x) \subset X$ is the union of all rational
curves of degree $d$ passing through $x$, and $\Gamma_d(x,y)$ is the
union of all rational curves of degree $d$ passing through $x$ and
$y$.  The variety $\cZ_{d,2} \subset X \times X$ contains all pairs of
points that are connected by a rational curve of degree $d$, and
$\cZ_d = \cZ_{d,3} \subset X \times X \times X$ consists of the
triples of points connected by such a curve.

If $\Omega \subset X$ is a $B$-stable Schubert variety, then so is
$\Gamma_\bd(\Omega)$ by Proposition~\ref{prop:rat2gw}(b) and the
following result.  This fact was also used in
\cite{chaput.perrin:on}.

\begin{prop}\label{prop:ratsingMde}
  The variety $M_{\bd,\be}$ is unirational and has rational
  singularities.
\end{prop}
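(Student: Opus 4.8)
The plan is to prove both assertions simultaneously by induction on $r$, using the inductive description of $M_{\bd,\be}$ as a fiber product over $X$. The base case $r=0$ is immediate: $M_{d_0,e_0} = \Mb_{0,e_0}(X,d_0)$ is rational (hence unirational) and has rational singularities by the result of Kim--Pandharipande quoted above. For the inductive step, I would write $M_{\bd,\be}$ as the fiber product of $\ev_{|\be'|} : M_{\bd',\be'} \to X$ and $\ev_1 : M_{d_r,e_r+1} \to X$ over $X$, where $\bd' = (d_0,\dots,d_{r-1})$ and $\be'$ is as in the excerpt. By induction $M_{\bd',\be'}$ is unirational with rational singularities, and $M_{d_r,e_r+1} = \Mb_{0,e_r+1}(X,d_r)$ has the same properties by the base case.

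For rational singularities I would invoke Theorem~\ref{thm:KBratsings}: the diagonal $G$-action on $X$ is transitive and split by Proposition~\ref{prop:split}, and both maps $\ev_{|\be'|}$ and $\ev_1$ land in $X$, so $\ev_{|\be'|}$-translated-by-$g$ fiber product with $\ev_1$ has rational singularities for generic $g \in G$. But here one must be slightly careful: the relevant evaluation map $\ev_1 : M_{d_r,e_r+1} \to X$ is $G$-equivariant, so by the remark following Theorem~\ref{thm:KBratsings} the isomorphism class of the fiber product is independent of $g$, and hence $M_{\bd,\be} = M_{\bd',\be'} \times_X M_{d_r,e_r+1}$ itself has rational singularities. (Equivariance of $\ev_1$ on the full Kontsevich space is clear from the construction of $\Mb_{0,n}(X,d)$.) This is the step I expect to require the most care, since one needs the fiber product to be taken along a $G$-equivariant map in order to remove the genericity of $g$; it is exactly the observation recorded after Theorem~\ref{thm:KBratsings} that makes this work.

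For unirationality I would argue as follows. The map $m : M_{d_r,e_r+1} \to X$ given by $\ev_1$ is $G$-equivariant and $X = G/P$ is $G$-split, so by Proposition~\ref{prop:loctriv} it is a locally trivial fibration; in particular all its fibers are isomorphic to a fixed fiber $F$, and $F$ is irreducible. Since $M_{d_r,e_r+1}$ is rational and $m$ is locally trivial over the (dense open, indeed whole, since $X$ is homogeneous) orbit, $F$ is unirational. Now consider the projection $M_{\bd,\be} \to M_{\bd',\be'}$ obtained from the fiber-product description: its fibers are the fibers of $\ev_1 : M_{d_r,e_r+1} \to X$ pulled back along $\ev_{|\be'|}$, hence all isomorphic to $F$, and this map is locally trivial because $\ev_1$ is. Therefore $M_{\bd,\be}$ is, Zariski-locally on $M_{\bd',\be'}$, a product $(\text{open in } M_{\bd',\be'}) \times F$; choosing an open set on which $M_{\bd',\be'}$ is rational, $M_{\bd,\be}$ contains a dense open subset dominated by an open subset of $\bP^N \times \bP^M$, hence is unirational. (Equivalently, $M_{\bd,\be} \to M_{\bd',\be'}$ is a locally trivial fibration with unirational fiber over a unirational base, and such a total space is unirational.) This completes the induction and proves both statements.

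One small point worth stating explicitly in the write-up: to apply Proposition~\ref{prop:loctriv} we need $M_{d_r,e_r+1}$ and $M_{\bd',\be'}$ to be irreducible, which follows inductively since unirational varieties are irreducible, and we need the relevant maps to be $G$-equivariant $G$-variety morphisms, which holds because the Kontsevich space carries a natural $G$-action compatible with all evaluation maps. With these remarks in place the argument is a clean two-line induction built on Theorem~\ref{thm:KBratsings}, Proposition~\ref{prop:loctriv}, and the Kim--Pandharipande rationality theorem.
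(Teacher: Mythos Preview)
Your proposal is correct and follows essentially the same approach as the paper: induction on $r$, using Theorem~\ref{thm:KBratsings} together with the $G$-equivariance remark to get rational singularities of the fiber product, and Proposition~\ref{prop:loctriv} applied to $\ev_1$ to see that the projection $M_{\bd,\be} \to M_{\bd',\be'}$ is a locally trivial fibration with unirational fibers over a unirational base. The paper's write-up is terser but the logical structure is identical.
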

\begin{proof}
  By induction on $r$ we may assume that $M_{\bd',\be'}$ is
  unirational and has rational singularities.  Since all maps in the
  Cartesian square
  \[
  \xymatrix{
    M_{\bd,\be} \ar[r]^{p\ \ \ } \ar[d]^{q} & M_{d_r,e_r+1} \ar[d]^{\ev_1} \\
    M_{\bd',\be'} \ar[r]^{\ev_{|\be'|}} & X }
  \]
  are equivariant, it follows from Theorem~\ref{thm:KBratsings} that
  $M_{\bd,\be}$ has rational singularities.
  Proposition~\ref{prop:loctriv} implies that $\ev_1$ is locally
  trivial, and since $M_{d_r,e_r+1}$ is rational, we deduce that the
  fibers of $\ev_1$ are unirational.  This in turn implies that $q$ is
  a locally trivial map with unirational fibers.  Finally, since
  $M_{\bd',\be'}$ is unirational, we conclude that $M_{\bd,\be}$ is
  unirational as well.
\end{proof}

\begin{cor}\label{cor:unirat}
  Let $\Omega_1, \Omega_2, \dots, \Omega_m \subset X$ be Schubert
  varieties in general position, with $m \leq |\be|$.  Then
  $M_{\bd,\be}(\Omega_1, \dots, \Omega_m)$ has rational singularities.
  Furthermore, the one-point Gromov-Witten variety
  $M_{\bd,\be}(\Omega_1)$ is unirational, and the two-point
  Gromov-Witten variety $M_{\bd,\be}(\Omega_1,\Omega_2)$ is either
  empty or unirational.
\end{cor}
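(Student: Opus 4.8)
The plan is to deduce Corollary~\ref{cor:unirat} from Proposition~\ref{prop:ratsingMde} together with the two general-purpose results already established in this section: the Kleiman--Bertini theorem for rational singularities (Theorem~\ref{thm:KBratsings}) and the unirationality statement of Proposition~\ref{prop:rat2gw}. The key point is that, by Proposition~\ref{prop:ratsingMde}, the variety $M_{\bd,\be}$ is unirational, has rational singularities, and carries a $G$-action for which every evaluation map $\ev_i : M_{\bd,\be} \to X$ is $G$-equivariant.

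First I would treat the rational singularities statement. The Gromov-Witten variety $M_{\bd,\be}(\Omega_1,\dots,\Omega_m) = \bigcap_{i=1}^m \ev_i^{-1}(\Omega_i)$ is the fiber product of the total evaluation map $(\ev_1,\dots,\ev_m) : M_{\bd,\be} \to X^m$ with the inclusion $\Omega_1 \times \dots \times \Omega_m \hookrightarrow X^m$. The component-wise action of $G^m$ on $X^m$ is transitive and split (as already noted in the proof of Corollary~\ref{cor:gwratsing}), the source $M_{\bd,\be}$ has rational singularities, and the product of Schubert varieties $\Omega_1 \times \dots \times \Omega_m$ has rational singularities since each $\Omega_i$ does and rational singularities are preserved under products. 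Applying Theorem~\ref{thm:KBratsings} to these data, and using that the evaluation map is $G^m$-equivariant so that the isomorphism class of the (general) fiber product is independent of the group element, gives that $M_{\bd,\be}(\Omega_1,\dots,\Omega_m)$ has rational singularities when the $\Omega_i$ are in general position.

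Next I would handle the one-point and two-point cases. For a single Schubert variety $\Omega_1$, apply Proposition~\ref{prop:rat2gw}(a) with $M = M_{\bd,\be}$ (unirational by Proposition~\ref{prop:ratsingMde}) and $f_1 = \ev_1$, which is $G$-equivariant; this yields that $\ev_1^{-1}(\Omega_1) = M_{\bd,\be}(\Omega_1)$ is unirational. For two Schubert varieties in general position, I may assume after translating by a general group element that $\Omega_1$ and $\Omega_2$ are opposite Schubert varieties; then Proposition~\ref{prop:rat2gw}(c) applied with $f_1 = \ev_1$ and $f_2 = \ev_2$ gives that $M_{\bd,\be}(\Omega_1,\Omega_2) = \ev_1^{-1}(\Omega_1) \cap \ev_2^{-1}(\Omega_2)$ is either empty or unirational.

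I do not expect any serious obstacle here: the corollary is essentially a packaging of the tools built in the section, and the only points requiring a little care are the reduction to opposite Schubert varieties in the two-point case (using that general-position Schubert varieties become opposite after a generic translation, and that translation does not change the isomorphism type thanks to equivariance) and the observation that $\ev = (\ev_1,\dots,\ev_m)$ is equivariant for the diagonal-in-each-factor $G^m$-action on $X^m$. The mild subtlety is simply to make sure one invokes the correct group ($G$ for the unirationality statements via Proposition~\ref{prop:rat2gw}, and $G^m$ for the rational singularities statement via Theorem~\ref{thm:KBratsings}).
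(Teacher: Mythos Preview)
Your approach is correct and matches the paper's implicit argument: the corollary is indeed an immediate consequence of Proposition~\ref{prop:ratsingMde}, Theorem~\ref{thm:KBratsings}, and Proposition~\ref{prop:rat2gw}, exactly as you outline.

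One misstatement to fix: the total evaluation map $(\ev_1,\dots,\ev_m) : M_{\bd,\be} \to X^m$ is \emph{not} $G^m$-equivariant, since $M_{\bd,\be}$ carries only the single diagonal $G$-action. Fortunately you do not need this. Theorem~\ref{thm:KBratsings} imposes no equivariance hypothesis on either map, and its conclusion ``for all $g$ in a dense open subset of $G^m$'' is precisely what ``Schubert varieties in general position'' means (compare the proof of Corollary~\ref{cor:gwratsing}). So simply delete the clause about $G^m$-equivariance and independence of the group element; the remainder of your argument is fine as written.
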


Let $d \in H_2(X;\Z)$ be an effective class and let $\pi_1, \pi_2 :
\cZ_{d,2} \to X$ denote the projections.  Define the variety
\[
\cZ^*_{d,2} = \cZ_{d,2} \setminus \bigcup_{d'} \cZ_{d',2}
\]
where the union is over all degrees $d' \in H_2(X;\Z)$ for which
$\cZ_{d',2} \varsubsetneq \cZ_{d,2}$.  This is a $G$-stable dense open
subset of $\cZ_{d,2}$ because $\cZ_{d,2} = \ev(M_{d,2})$ is
irreducible.  For $x \in X$ we also set $\Gamma^*_d(x) =
\pi_2(\pi_1^{-1}(x) \cap \cZ^*_{d,2})$, a dense open subset of
$\Gamma_d(x)$.

\begin{lemma}\label{lem:ratint2}
  Let $d \in H_2(X;\Z)$ be an effective class and $\Omega \subset X$ a
  Schubert variety.
  
  \noin{\rm(a)} The variety $\cZ_{d,2}$ is rational and has rational
  singularities.
 
  \noin{\rm(b)} The intersection $\Omega \cap \Gamma_d(z)$ is
  unirational and has rational singularities for all points $z$ in the
  open cell $\Gamma_d(\Omega)^\circ \subset \Gamma_d(\Omega)$.
  
  \noin{\rm(c)} Let $\Omega^* \subset \Omega$ be any dense open
  subset.  Then $\Omega^* \cap \Gamma_d^*(z) \neq \emptyset$ for all
  points $z$ in a dense open subset of $\Gamma_d(\Omega)$.
\end{lemma}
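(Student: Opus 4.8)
The plan is to deduce all three parts by applying Proposition~\ref{prop:rat2gw} and Corollary~\ref{cor:unirat} to the variety $M = M_{d,2}$ together with its two evaluation maps $f_1 = \ev_1$ and $f_2 = \ev_2$, which are $G$-equivariant, and to exploit the local triviality statements that come with them. By Proposition~\ref{prop:ratsingMde}, $M_{d,2}$ is unirational and has rational singularities. For part (a), observe that $\cZ_{d,2} = \ev(M_{d,2})$ is the image of $M_{d,2}$ under the (dominant, equivariant) map $\ev = (\ev_1,\ev_2)$ onto $\cZ_{d,2}$. Rationality of $\cZ_{d,2}$ will follow by the same mechanism used in Proposition~\ref{prop:ratsingMde}: $\pi_1 : \cZ_{d,2} \to X$ is equivariant with $X$ homogeneous, hence (by Proposition~\ref{prop:loctriv}) locally trivial over $X$ with irreducible fiber $\Gamma_d(x_0)$; since $X$ is rational it suffices to know that a general fiber of $\pi_1$ is rational. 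That fiber is $\Gamma_d(x_0)$, which by \cite{buch.mihalcea:quantum, chaput.perrin:rationality} (Theorem~\ref{thm:rat3gw}) — or, more directly, by Proposition~\ref{prop:rat2gw}(b) applied to $\ev_1,\ev_2 : M_{d,1,\dots}$ — is a Schubert variety and hence rational. For rational singularities of $\cZ_{d,2}$, I would use that $\ev_1 : M_{d,2} \to X$ is locally trivial (Proposition~\ref{prop:loctriv}), reduce to a fiber, and combine Brion's Lemma~\ref{lem:ratsingfib} with the fact that the generic fiber map $M_{d,2} \to \cZ_{d,2}$ has rationally connected general fibers (this is the key fact proved in \S\ref{sec:comin}); alternatively, since the problem is local over $X$, pass to the fiber over a general point and invoke that $\pi_1$ is locally trivial so that $\cZ_{d,2}$ has rational singularities iff its fiber $\Gamma_d(x_0)$ does, and a Schubert variety has rational singularities.

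For part (b), set $\wt\Omega = \Gamma_d(\Omega) = \ev_2(\ev_1^{-1}(\Omega))$, which by Proposition~\ref{prop:rat2gw}(b) (applied to $M_{d,2}$ with $\Omega_1 = \Omega$) is a Schubert variety, and the map $\ev_2 : \ev_1^{-1}(\Omega) \to \wt\Omega$ is locally trivial over $\wt\Omega^\circ$. For $z \in \wt\Omega^\circ$, the fiber $\ev_2^{-1}(z) \cap \ev_1^{-1}(\Omega) \subset M_{d,2}$ maps onto $\Omega \cap \Gamma_d(z)$ via $\ev_1$; to identify the fiber with (a variety dominating) this intersection and to transfer unirationality, I would argue as in Proposition~\ref{prop:rat2gw}(a): $\ev_1^{-1}(\Omega)$ is unirational by Corollary~\ref{cor:unirat}, its image $\ev_2$-fiber over the open cell is unirational by local triviality, and then $\ev_1$ restricted to that fiber is again locally trivial onto $\Omega \cap \Gamma_d(z)$, giving unirationality of the latter. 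Rational singularities of $\Omega \cap \Gamma_d(z)$ follow from Corollary~\ref{cor:unirat} (the boundary Gromov-Witten variety $M_{d,2}(\Omega)$ has rational singularities, hence so do its general fibers by Lemma~\ref{lem:ratsingfib}, hence so does the locally-trivially-equivalent intersection downstairs) — or directly from Theorem~\ref{thm:KBratsings} applied to $\Gamma_d(z) \hookrightarrow X$ (general $z$) and $\Omega \hookrightarrow X$, using that $\Gamma_d(z)$ has rational singularities as a Schubert variety.

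For part (c), I want to show that the subset of $z \in \Gamma_d(\Omega)$ for which $\Omega^* \cap \Gamma_d^*(z) = \emptyset$ is not dense, i.e.\ is contained in a proper closed subset. Recall $\cZ^*_{d,2}$ is a $G$-stable dense open subset of the irreducible variety $\cZ_{d,2}$, and $\Gamma_d^*(z) = \pi_2(\pi_1^{-1}(z) \cap \cZ^*_{d,2})$. Consider the incidence locus $W = \{(\omega, z) \in \Omega \times \Gamma_d(\Omega) : (\omega, z) \in \cZ^*_{d,2}\}$, an open subset of $\Omega \times_{?} \cZ_{d,2}$; I claim $W$ is dense in a component that dominates $\Gamma_d(\Omega)$ under the second projection. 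Indeed, the composite $\ev_1^{-1}(\Omega) \to \Omega \times X$, $(m) \mapsto (\ev_1(m), \ev_2(m))$, has image a dense subset of $\{(\omega,z) : \omega \in \Omega, z \in \Gamma_d(\omega)\}$, which is irreducible (being the image of the unirational, hence irreducible, $\ev_1^{-1}(\Omega)$) and dominates $\Gamma_d(\Omega)$ under $z$; intersecting with the $G$-stable dense open $\cZ^*_{d,2}$ keeps it dense, and intersecting with the dense open $\Omega^* \times X$ keeps it dense since the total space is irreducible. Therefore the second projection of $W \cap (\Omega^* \times X)$ is dense-constructible in $\Gamma_d(\Omega)$, which says exactly that $\Omega^* \cap \Gamma_d^*(z) \neq \emptyset$ for all $z$ in a dense open subset. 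The main obstacle I anticipate is the careful bookkeeping in parts (b) and (c) of matching fibers of evaluation maps on $M_{d,2}$ with the downstairs intersections $\Omega \cap \Gamma_d(z)$ — one must check these maps are dominant with the right (irreducible, unirational) fibers and that "general position" and "dense open" conditions are compatible across the Cartesian diagrams; the rationality and rational-singularity inputs themselves are then immediate from the cited results.
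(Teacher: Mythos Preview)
Your overall strategy is close to the paper's, but you route everything through $M_{d,2}$ while the paper works directly with $\cZ_{d,2} \subset X \times X$ and its two projections $\pi_1,\pi_2 : \cZ_{d,2} \to X$. This is not merely cosmetic: it is precisely what closes the gap in your part~(b). In $\cZ_{d,2}$, the fiber of $\pi_2 : \pi_1^{-1}(\Omega) \to \Gamma_d(\Omega)$ over $z$ \emph{is} $\Omega \cap \Gamma_d(z)$, via the obvious identification $\pi_1^{-1}(\Omega) \cap \pi_2^{-1}(z) \cong \Omega \cap \Gamma_d(z)$. Since $\pi_1 : \cZ_{d,2} \to X$ is a locally trivial fibration with fiber the Schubert variety $\Gamma_d(x)$, the variety $\pi_1^{-1}(\Omega)$ is rational with rational singularities (and $\Omega = X$ gives part~(a); this is your second suggestion for (a), and the correct one). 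Then Proposition~\ref{prop:rat2gw}(b) applied with $M = \cZ_{d,2}$ makes $\pi_2 : \pi_1^{-1}(\Omega) \to \Gamma_d(\Omega)$ locally trivial over $\Gamma_d(\Omega)^\circ$, so all fibers over the open cell are isomorphic; Lemma~\ref{lem:ratsingfib} gives rational singularities for one of them, hence for all, and unirationality is immediate.

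By contrast, in your setup the $\ev_2$-fiber of $\ev_1^{-1}(\Omega) \subset M_{d,2}$ over $z$ is not $\Omega \cap \Gamma_d(z)$ but only maps onto it via $\ev_1$, with fibers the two-point Gromov--Witten varieties $M_{d,2}(w,z)$. Your assertion that this restriction of $\ev_1$ is ``again locally trivial'' is unjustified: there is no group acting transitively on $\Omega \cap \Gamma_d(z)$ to which Proposition~\ref{prop:loctriv} applies. Unirationality still transfers along a surjection, so that half of (b) survives, but rational singularities do not push forward along arbitrary surjections, so your first argument fails here. Your Kleiman--Bertini alternative (Theorem~\ref{thm:KBratsings}) only yields the conclusion for \emph{general} $z$; to get every $z \in \Gamma_d(\Omega)^\circ$ you would need to add that $B$ acts transitively on this open cell, making all the intersections $\Omega \cap \Gamma_d(z)$ mutually isomorphic---at which point you have essentially rebuilt the paper's argument in $\cZ_{d,2}$. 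Your part~(c) is correct and matches the paper, which runs the same irreducibility-plus-dense-open argument inside $\pi_1^{-1}(\Omega) \subset \cZ_{d,2}$.
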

\begin{proof}
  It follows from Proposition~\ref{prop:loctriv} that the projection
  $\pi_1 : \cZ_{d,2} \to X$ is a locally trivial fibration.  Since
  each fiber $\pi_1^{-1}(x) \cong \Gamma_d(x)$ is a Schubert variety
  in $X$, we deduce that $\pi_1^{-1}(\Omega)$ is rational and has
  rational singularities.  Part (a) follows as a special case of this.
  Since Proposition~\ref{prop:rat2gw}(b) implies that $\pi_2 :
  \pi_1^{-1}(\Omega) \to \Gamma_d(\Omega)$ is a locally trivial
  fibration over $\Gamma_d(\Omega)^\circ$, it follows that
  $\pi_1^{-1}(\Omega) \cap \pi_2^{-1}(z) \cong \Omega \cap
  \Gamma_d(z)$ is unirational for all $z \in \Gamma_d(\Omega)^\circ$,
  and Lemma~\ref{lem:ratsingfib} implies that $\Omega \cap
  \Gamma_d(z)$ has rational singularities.  This proves (b).  Since
  $\pi_1(\cZ_{d,2}^*) = X$ and $\pi_1^{-1}(\Omega)$ is irreducible, we
  deduce that $U = \pi_1^{-1}(\Omega^*) \cap \cZ_{d,2}^*$ is a dense
  open subset of $\pi_1^{-1}(\Omega)$.  Part (c) follows from this
  because $\pi_2(U)$ contains a dense open subset of
  $\Gamma_d(\Omega)$, and $\Omega^* \cap \Gamma_d^*(z) \cong U \cap
  \pi_2^{-1}(z) \neq \emptyset$ for all $z \in \pi_2(U)$.
\end{proof}

\section{Gromov-Witten varieties of cominuscule
  spaces}\label{sec:comin}

Let $X = G/P$ be a homogeneous space, where $G$ is a simple complex
linear algebraic group and $P$ is a {\em maximal\/} parabolic
subgroup.  Fix a maximal torus $T$ and a Borel subgroup $B$ such that
$T \subset B \subset P \subset G$, and let $R$ be the associated root
system, with positive roots $R^+ \subset R$ and simple roots $\Delta
\subset R^+$.  Let $W = N_G(T)/T$ be the Weyl group of $G$ and $W_P =
N_P(T)/T \subset W$ the Weyl group of $P$.  The subgroup $P$
corresponds to a simple root $\alpha \in \Delta$, such that $W_P$ is
generated by all simple reflections except $s_\alpha$.  The variety
$X$ is called {\em cominuscule\/} if $\alpha$ is a cominuscule simple
root, i.e.\ when the highest root of $R$ is expressed as a linear
combination of simple roots, the coefficient of $\alpha$ is one.  The
collection of cominuscule varieties include the type A Grassmannians
$\Gr(m,n)$, Lagrangian Grassmannians $\LG(n,2n)$, maximal orthogonal
Grassmannians $\OG(n,2n)$, quadric hypersurfaces $Q^n \subset
\bP^{n+1}$, as well as two exceptional varieties called the Cayley
Plane ($E_6/P_6$) and the Freudenthal variety ($E_7/P_7$).  We will
assume that $X$ is cominuscule in the following.

Each element $u \in W$ defines a $T$-fixed point $u.P \in X$ and a
{\em Schubert variety} $X(u) = \overline{B u.P} \subset X$.  Both
$u.P$ and $X(u)$ depend only on the coset of $u$ in $W/W_P$. Let $W^P
\subset W$ be the set of minimal length representatives for the cosets
in $W/W_P$.  Then $W^P$ is in one-to-one correspondence with the set
of $T$-fixed points in $X$ as well as the set of $B$-stable Schubert
varieties in $X$, and for $u \in W^P$ we have $\dim X(u) = \ell(u)$.
We will identify $H_2(X;\Z) = \Z\, [X(s_\alpha)]$ with the group of
integers.  The {\em degree\/} of a curve $C \subset X$ is the integer
$d \in \N$ for which $[C] = d\,[X(s_\alpha)]$.

Given two points $x,y \in X$, we let $d(x,y)$ denote the smallest
degree of a rational curve containing $x$ and $y$ \cite{zak:tangents}.
Equivalently, $d(x,y)$ is minimal with the property that $(x,y) \in
\cZ_{d(x,y),2}$.  For any $n \in \N$ we also let $d_X(n)$ be the
smallest degree for which any collection of $n$ points in $X$ is
contained in a connected rational curve of degree $d_X(n)$, i.e.\
$d_X(n)$ is minimal such that $\cZ_{d_X(n),n} = X^n$.  Notice that
$d_X(2) = \max \{ d(x,y) \mid x,y\in X \}$, and it follows from
\cite{fulton.woodward:on} that $d_X(2)$ is the smallest degree of the
quantum parameter $q$ that occurs in the square of a point in the
small quantum ring $\QH(X)$.  Furthermore, for any degree $d \in \N$
we have $\cZ^*_{d,2} = \{ (x,y) \in X^2 \mid d(x,y)=d' \}$ and
$\Gamma^*_d(x) = \{ y \in X \mid d(x,y) = d' \}$, where $d' =
\min(d,d_X(2))$.  The numbers $d_X(2)$ and $d_X(3)$ were computed in
\cite[Prop.~18]{chaput.manivel.ea:quantum*1} and
\cite[Prop.~3.4]{chaput.perrin:rationality} and are given by the
following table.\medskip

\begin{center}
\begin{tabular}{c c c c c c}
  \hline\vspace{-3mm}\\
  $X$ & $\dim(X)$ & $d_X(2)$ & $d_X(3)$\vspace{1mm}\\
  \hline\vspace{-3mm}\\
  $\Gr(m,m+k)$ & $mk$ & $\min(m,k)$ & $\min(2m,2k,\max(m,k))$\vspace{1mm}\\
  $\LG(n,2n)$ & $\frac{n(n+1)}{2}$ & $n$ & $n$\vspace{1mm}\\
  $\OG(n,2n)$ & $\frac{n(n-1)}{2}$ & $\lfloor\frac{n}{2}\rfloor$ &
  $\lceil\frac{n}{2}\rceil$\vspace{1mm}\\
  $Q^n$ & $n$ & $2$ & $2$\vspace{1mm}\\
  $E_6/P_6$ & 18 & 2 & 4\vspace{1mm}\\
  $E_7/P_7$ & 27 & 3 & 3\vspace{1mm}\\
  \hline
\end{tabular} 
\end{center}
\bigskip

We require the following proposition, which combines parts of Prop.~18
and Lemma 21 from \cite{chaput.manivel.ea:quantum*1}.  Notice that part
(c) implies that $d_X(2)$ is equal to the number of occurrences of
$s_\alpha$ in a reduced word for the element $u \in W^P$ for which
$X(u)=X$.

\begin{prop}[\cite{chaput.manivel.ea:quantum*1}]\label{prop:cmpfacts}
  Let $X = G/P$ be a cominuscule variety.\smallskip
  
  \noin{\em(a)} The diagonal action of $G$ on $\cZ^*_{d,2}$ is
  transitive for each $d \in [0,d_X(2)]$.\smallskip

  \noin{\em(b)} Let $x,y\in X$ and set $d = d(x,y)$.  Then the
  stabilizer in $G$ of the subvariety $\Gamma_d(x,y) \subset X$ is a
  parabolic subgroup of $G$ that acts transitively on
  $\Gamma_d(x,y)$.\smallskip

  \noin{\em(c)} Let $u \in W^P$.  Then $d(1.P, u.P)$ is the number of
  occurrences of $s_\alpha$ in any reduced expression for $u$.
\end{prop}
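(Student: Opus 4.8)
The plan is to deduce all three parts from the structure theory of cominuscule $G/P$, following \cite{chaput.manivel.ea:quantum*1} (Proposition~18 and Lemma~21, parts of which the statement combines). Everything rests on two features of the cominuscule case: the nilradical of $\mathfrak p$ is abelian, so every positive root outside the Levi of $P$ has $\alpha$-coefficient exactly $1$; and, as a consequence, every $T$-stable curve in $X$ is a line. Write $\phi(u)$ for the number of occurrences of $s_\alpha$ in a reduced word for $u\in W^P$. I would first invoke the standard fact about cominuscule Weyl groups that $\phi(u)$ is well defined, independently of the reduced word.

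I would establish (c) next. For $d(1.P,u.P)\le\phi(u)$ I would read a reduced word for $u$ from left to right and assemble a connected chain of curves from $1.P$ to $u.P$ in which each $s_\alpha$ contributes one $T$-line and each other simple reflection contributes a degree-$0$ edge between consecutive $T$-fixed points; this is a connected genus-$0$ stable map of degree $\phi(u)$ through the two points. For the reverse inequality I would take any rational curve of degree $d$ through $1.P$ and $u.P$, specialize it inside $\Mb_{0,2}(X,d)$ to a $T$-fixed stable map — necessarily a tree of $T$-lines, with the two marked points joined by a path of at most $d$ non-contracted edges — and use the combinatorial input (again part of the cominuscule package of \cite{chaput.manivel.ea:quantum*1}) that $\phi$ changes by at most $1$ across each such edge, whence $\phi(u)\le d$.

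For (a), I would use the identity $\cZ^*_{d,2}=\{(x,y)\in X^2 : d(x,y)=d\}$ for $d\le d_X(2)$ recorded in \S\ref{sec:comin}, together with the $G$-invariance of the distance function. After moving the first point to a base point $o$, transitivity of $G$ on $\cZ^*_{d,2}$ becomes transitivity of $P=\mathrm{Stab}_G(o)$ on $\{y\in X : d(o,y)=d\}$. Since $P$ fixes $o$ it preserves this set, which is therefore a union of $P$-orbits — a union of finitely many Schubert cells indexed by $W_P\backslash W/W_P$ — and the cell containing $y$ is determined by the value of $\phi$ via (c). The remaining point, that this level set is a \emph{single} $P$-orbit for every $d\in[0,d_X(2)]$, is a combinatorial assertion about the cominuscule poset of $X$; it is transparent for $\Gr$ and $\OG$, where $\{y : d(o,y)=d\}$ is the set of (isotropic) subspaces $U$ with $\dim(U\cap o)$ and $\dim(U+o)$ prescribed — a single orbit of the stabilizer of $o$ — and must be verified type by type otherwise. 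I expect this case analysis, and the uniform handling of the exceptional spaces and of $\LG(n,2n)$ (which, as the introduction notes, tends to require separate treatment), to be the main obstacle in a self-contained proof.

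Finally, for (b) I would set $d=d(x,y)$, so that $(x,y)\in\cZ^*_{d,2}$; by (a) the pair then lies in a single $G$-orbit, hence $\Gamma_d(x,y)$ is a $G$-translate of one fixed variety and its stabilizer lies in one fixed conjugacy class. Working with a standard pair, I would identify $\Gamma_d(x,y)$ explicitly as a sub-cominuscule variety cut out by a partial flag — for $\Gr$ it is $\{U : x\cap y\subseteq U\subseteq x+y\}$, a sub-Grassmannian — which is manifestly homogeneous under the parabolic $Q$ stabilizing that flag; and $Q=\mathrm{Stab}_G(\Gamma_d(x,y))$ because the flag is recovered from $\Gamma_d(x,y)$ (for $\Gr$, as the intersection and the span of its points). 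As $Q$ is parabolic and acts transitively on $\Gamma_d(x,y)$, part (b) follows. The only delicate step is producing the description of $\Gamma_d(x,y)$ uniformly across all cominuscule types, for which I would again rely on the explicit computations in \cite{chaput.manivel.ea:quantum*1}.
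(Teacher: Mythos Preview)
The paper does not prove this proposition: it is quoted verbatim from \cite{chaput.manivel.ea:quantum*1} (combining parts of Proposition~18 and Lemma~21 there), with no argument supplied. So there is no ``paper's own proof'' to compare against; your sketch is effectively a reconstruction of the arguments in the cited source, and you correctly flag that the substantive content---the single-orbit claim in (a) and the explicit identification of $\Gamma_d(x,y)$ in (b)---requires the type-by-type analysis carried out there.

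One technical caution on your argument for (c). For the lower bound $\phi(u)\le d$ you specialize a degree-$d$ curve to a $T$-fixed stable map and assert that the two marked points are joined by a path of at most $d$ non-contracted $T$-lines. This is fine, but the inequality then needs that $\phi$ changes by at most $1$ across \emph{each} $T$-line, i.e.\ that any $T$-stable curve joins fixed points $v.P$ and $vs_\beta.P$ with $\beta$ a root whose $\alpha$-coefficient is $1$. This is exactly where the cominuscule hypothesis enters (every root outside the Levi has $\alpha$-coefficient $1$), and you should make that step explicit rather than leaving it as ``combinatorial input.'' Also, strictly speaking the degree of a $T$-fixed stable map is the sum of the degrees of its components, so the number of non-contracted edges on the path joining the two marked points equals $d$ only if every non-contracted component lies on that path; in general you get $\le d$, which is what you want, but the phrasing ``at most $d$'' is doing real work and should be justified.
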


Proposition~\ref{prop:cmpfacts} is the foundation of the following
construction from \cite{chaput.manivel.ea:quantum*1}.  Fix a degree $d
\in [0,d_X(2)]$ and let $Y_d$ be the set of all subvarieties
$\Gamma_d(x,y)$ of $X$ for which $d(x,y)=d$.  The group $G$ acts on
$Y_d$ by translation, and by parts (a) and (b) of
Proposition~\ref{prop:cmpfacts} we can identify $Y_d$ with a
projective homogeneous space for $G$.  The points of the variety $Y_d$
provide a generalization to cominuscule varieties of the kernel-span
pairs of curves in classical Grassmannians \cite{buch:quantum,
  buch.kresch.ea:gromov-witten}.  For example, when $X = \Gr(m,n)$ is
a Grassmannian of type A, the space $Y_d$ is the two-step flag variety
$\Fl(m-d,m+d;n)$ of kernel-span pairs of expected dimension.  Notice
that the homogeneous space $Y_d$ contains a unique $B$-fixed point,
which implies that there is a unique $B$-stable Schubert variety in
$X$ of the form $\Gamma_d(x,y)$.  We denote this non-singular Schubert
variety by $X_d$.

\begin{lemma}\label{lem:linechain}
  Let $x, y \in X$.  Then there exists a chain of $d(x,y)$ rational
  curves of degree $1$ through $x$ and $y$.
\end{lemma}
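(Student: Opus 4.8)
The plan is to reduce the statement to the three key facts from Proposition~\ref{prop:cmpfacts}, and to use the observation following that proposition that $d(x,y)$ equals the number of occurrences of $s_\alpha$ in a reduced word for the minimal coset representative $u \in W^P$ with $1.P$ and $u.P$ the two given points (after translating by $G$). First I would set $d = d(x,y)$. Since the diagonal action of $G$ on $\cZ^*_{d,2}$ is transitive by part~(a), and since $(x,y) \in \cZ^*_{d,2}$ by definition of $d = d(x,y)$, I may assume $x = 1.P$ and $y = u.P$ for the unique $B$-stable configuration, i.e.\ $y$ is the appropriate $T$-fixed point; equivalently I may work with the Schubert variety $\Gamma_d(x,y) = X_d$, which is non-singular and $B$-stable by the discussion preceding the lemma.

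The heart of the argument is an induction on $d$. For $d = 0$ the two points coincide and the statement is vacuous (an empty chain, or a single degree-$0$ constant). For $d = 1$, $x$ and $y$ lie on a line and the chain consists of that single line. For the inductive step, I would pick a reduced word for $u \in W^P$ in which $s_\alpha$ occurs exactly $d$ times, and peel off the component of the word up to and including the \emph{first} occurrence of $s_\alpha$: this splits $u = u'' u'$ with $d(1.P, u'.P) = 1$ and (reading the remaining word) $d(u'.P, u.P) = d-1$, after suitable identification. Concretely, choose an intermediate point $z \in X$ with $d(x,z) = 1$ and $d(z,y) = d-1$; the existence of such a $z$ follows from part~(c) of Proposition~\ref{prop:cmpfacts} applied to the reduced word, since splitting a reduced word splits the count of $s_\alpha$'s additively and each sub-word is again reduced and represents an element of $W^P$ after passing to the minimal representative. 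Then a line through $x$ and $z$, concatenated with a chain of $d-1$ lines through $z$ and $y$ obtained from the induction hypothesis, gives a chain of $d$ lines through $x$ and $y$.

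The step I expect to be the main obstacle is verifying cleanly that one \emph{can} split off a single $s_\alpha$ at the front of a reduced word for $u \in W^P$ and still land in $W^P$ for both pieces — that is, that the decomposition respects the minimal-length-representative condition and that the resulting intermediate point $z$ genuinely satisfies $d(x,z)=1$ and $d(z,y)=d-1$ rather than something smaller. This is where part~(c) of Proposition~\ref{prop:cmpfacts} does the real work: it translates the geometric quantity $d(\cdot,\cdot)$ into the purely combinatorial count of $s_\alpha$'s, which is visibly additive under concatenation of reduced words, so the only thing to check is that a reduced word for $u \in W^P$ can be chosen with its $s_\alpha$'s distributed so that a length-one prefix contains exactly one of them, and that the tail, after reducing to $W^P$, still contains $d-1$. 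Both are standard facts about cominuscule $W^P$ (the relevant reduced words are "staircase-shaped"), and the remaining geometry — that $d(x,z)=1$ means $x,z$ lie on an actual line, and that chains concatenate along the shared point — is immediate.
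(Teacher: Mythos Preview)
Your approach is correct and matches the paper's: reduce to $T$-fixed points $1.P$ and $u.P$, take a reduced word for $u\in W^P$, and split it at the occurrences of $s_\alpha$. The paper carries this out non-inductively---it sets $u_i = s_{\beta_1}\cdots s_{\beta_{j_i}}$ for all $i$ at once and verifies only that each consecutive distance $d(u_{i-1}.P,u_i.P)$ equals $1$ (immediate since $u_{i-1}^{-1}u_i = w s_\alpha$ with $w\in W_P$, so one translates by $w^{-1}$ and uses $d(1.P,s_\alpha.P)=1$)---which completely sidesteps the obstacle you flag about whether the tail lands in $W^P$.
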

\begin{proof}
  We may assume that $x = 1.P$ and $y = u.P$ are $T$-fixed points,
  with $u \in W^P$.  Let $u = s_{\beta_1} s_{\beta_2} \cdots
  s_{\beta_\ell}$ be a reduced expression for $u$, and let $j_1 < j_2
  < \dots < j_{d(x,y)}$ be the indices for which $\beta_{j_i} =
  \alpha$.  Set $u_0 = 1$ and $u_i = s_{\beta_1} s_{\beta_2} \cdots
  s_{\beta_{j_i}}$ for each $i \in [1,d(x,y)]$.  Then $u_0.P = x$, and
  since $u \in W^P$, we must have $u = u_{d(x,y)}$, so $u_{d(x,y)}.P =
  y$.  Finally notice that $d(u_{i-1}.P, u_i.P) = d(1.P, u_{i-1}^{-1}
  u_i.P) = 1$, so $u_{i-1}.P$ and $u_i.P$ can be joined by a line in
  $X$.
\end{proof}

\begin{remark}
  The assumption that $X$ is cominuscule is necessary in
  Lemma~\ref{lem:linechain}.  For example, if $X = \OG(2,7)$ is the
  orthogonal Grassmannian of 2-dimensional isotropic subspaces in
  $\C^7$ equipped with a non-degenerate symmetric bilinear form, then
  two general points in $X$ are joined by an irreducible curve of
  degree 2, but not by a union of two lines.  The same is true if $X$
  is any adjoint variety \cite{chaput.perrin:on}.
\end{remark}

It follows from Lemma~\ref{lem:linechain} that if $\bd =
(d_0,\dots,d_r) \in \N^{r+1}$ is any sequence with $d_i>0$ for $i>0$,
and $\Omega \subset X$ is any closed subvariety, then 
$\Gamma_\bd(\Omega) = \Gamma_{|\bd|}(\Omega)$.  The following result
shows how to find the degree 1 neighborhood of a Schubert variety in
$X$.  Let $w_P$ denote the unique longest element in $W_P$.

\begin{lemma}\label{lem:deg1nbhd}
  Let $u \in W^P$ be such that $X(u) \neq X$.  Then $\Gamma_1(X(u)) =
  X(u w_P s_\alpha)$.
\end{lemma}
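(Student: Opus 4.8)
The plan is to compute the Schubert variety $\Gamma_1(X(u))$ explicitly from the Bruhat decomposition. Recall — from the discussion preceding the lemma, i.e.\ Proposition~\ref{prop:rat2gw}(b) combined with Proposition~\ref{prop:ratsingMde} — that $\Gamma_1(X(u))$ is an \emph{irreducible} $B$-stable Schubert variety. The starting point is a reduction to the base point: since the formation of $\Gamma_1$ is $G$-equivariant and, by Proposition~\ref{prop:loctriv}, the evaluation map $\ev_1$ is locally trivial, the cell $B\,u.P$ being dense in $X(u)$ gives that $\ev_1^{-1}(B\,u.P)$ is dense in $\ev_1^{-1}(X(u))$, and hence
\[
\Gamma_1(X(u))=\overline{B\cdot\dot u\cdot\Gamma_1(1.P)},
\]
where $\dot u\in N_G(T)$ represents $u$, $\Gamma_1(1.P)\subset X$ is the union of all lines through the base point $1.P$, and the right-hand side is independent of the choice of $\dot u$.

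First I would identify $\Gamma_1(1.P)$. Let $G_\alpha=\langle U_\alpha,U_{-\alpha}\rangle$; the line $\ell_0=\overline{U_{-\alpha}.1.P}=G_\alpha.1.P$ is a degree-one curve joining $1.P$ and $s_\alpha.P$, and for cominuscule $X$ the stabilizer $P$ of $1.P$ acts transitively on the family of lines through $1.P$, so every such line is a $P$-translate of $\ell_0$. Hence $\Gamma_1(1.P)=\overline{P\cdot\ell_0}=\overline{P\cdot(s_\alpha.P)}$, the closure of the $P$-orbit of $s_\alpha.P$. Being $P$-stable, irreducible and closed, this is a $B$-stable Schubert variety; its $B$-orbits $B\,w.P$ are those with $w\in W^P$ and $w\le v$ for some $v\in W_Ps_\alpha W_P$. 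Using the subword property, $w_1s_\alpha\le w_Ps_\alpha$ for all $w_1\in W_P$, so the longest element of $W_Ps_\alpha W_P$ lies in the coset $w_Ps_\alpha W_P$, and Deodhar's criterion then shows this set of $w$ equals $\{w\in W^P:w\le(w_Ps_\alpha)^{\min}\}$. Therefore $\Gamma_1(1.P)=X(w_Ps_\alpha)$.

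Combining, $\Gamma_1(X(u))=\overline{B\,\dot u\cdot X(w_Ps_\alpha)}$, which is the image in $X$ of $\overline{B\,u\,B\,w_P\,B\,s_\alpha\,B\,P}$. As $u\in W^P$ and $w_P\in W_P$ we have $\ell(uw_P)=\ell(u)+\ell(w_P)$, so $B\,u\,B\,w_P\,B=B\,uw_P\,B$, and appending $s_\alpha$ forms the $0$-Hecke (Demazure) product $uw_P\ast s_\alpha$; tracking which $B$-cells of $X$ occur in the image one gets
\[
\Gamma_1(X(u))=X(u)\cup X(uw_Ps_\alpha).
\]
Because $\Gamma_1(X(u))$ is irreducible, $X(u)$ and $X(uw_Ps_\alpha)$ are nested and $\Gamma_1(X(u))$ is whichever is larger; in particular it equals $X(u)$ or $X(uw_Ps_\alpha)$. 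It remains to exclude $\Gamma_1(X(u))=X(u)$ under the hypothesis $X(u)\neq X$: choosing any $y\in X\setminus X(u)$, Lemma~\ref{lem:linechain} produces a chain of $d(1.P,y)$ lines joining $1.P\in X(u)$ to $y\notin X(u)$, and the first of these lines that leaves $X(u)$ both meets $X(u)$ and contains a point outside it, so $\Gamma_1(X(u))\supsetneq X(u)$. Hence $\Gamma_1(X(u))=X(uw_Ps_\alpha)$, as claimed.

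The main work, and the part I expect to be fussiest, is the Bruhat-theoretic bookkeeping of the two middle paragraphs: the identification $\overline{P\cdot(s_\alpha.P)}=X(w_Ps_\alpha)$ through Deodhar's criterion, and the determination of the $B$-cells of $\overline{B\,\dot u\cdot X(w_Ps_\alpha)}$ via the $0$-Hecke product. Conceptually, though, the decisive input is the last step, where the cominuscule hypothesis enters through Lemma~\ref{lem:linechain}: it is exactly what allows one to exit a proper Schubert variety along a single line, which is precisely the force of the assumption $X(u)\neq X$.
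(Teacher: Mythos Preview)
Your argument is correct and follows a genuinely different route from the paper's. The paper proves both inclusions directly at the level of $T$-fixed points: $X(uw_Ps_\alpha)\subset\Gamma_1(X(u))$ is immediate from $d(u.P,uw_Ps_\alpha.P)=d(1.P,s_\alpha.P)=1$, and for the reverse inclusion the paper takes a $T$-fixed point $w.P\in\Gamma_1(X(u))$, finds a $T$-stable line to some fixed point $v.P\in X(u)$ with $v\le u$, uses Proposition~\ref{prop:cmpfacts}(c) to write $v^{-1}w=xs_\alpha y$ with $x,y\in W_P$, and concludes $wy^{-1}=vxs_\alpha\le uw_Ps_\alpha$. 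The hypothesis $X(u)\neq X$ enters only via the one-line observation that it forces $uw_P<uw_Ps_\alpha$ (since $uw_P$ is maximal in $uW_P$, it equals $w_0$ exactly when $X(u)=X$). By contrast, you reduce to the base point, identify $\Gamma_1(1.P)=X(w_Ps_\alpha)$, propagate by the Demazure product, and then invoke Lemma~\ref{lem:linechain} to exclude $\Gamma_1(X(u))=X(u)$. The paper's route is shorter and entirely self-contained within Proposition~\ref{prop:cmpfacts}(c); yours is more structural and makes the $0$-Hecke mechanism explicit.

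One small point: your assertion that $P$ acts transitively on the family of lines through $1.P$ is true for cominuscule $X$ but is not established in the paper. You can bypass it using Proposition~\ref{prop:cmpfacts}(a), which says $G$ acts transitively on $\cZ^*_{1,2}$, hence $P=\mathrm{Stab}(1.P)$ acts transitively on $\Gamma_1^*(1.P)=\Gamma_1(1.P)\setminus\{1.P\}$; this already gives $\Gamma_1(1.P)=\overline{P\cdot s_\alpha.P}$. Note also that the cominuscule hypothesis enters your argument at this transitivity step, not only through Lemma~\ref{lem:linechain} as you suggest in your closing remark.
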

\begin{proof}
  Notice that $\Gamma_1(X(u))$ is a $B$-stable Schubert variety in
  $X$.  The inclusion $X(u w_P s_\alpha) \subset \Gamma_1(X(u))$
  follows because $d(u.P, u w_P s_\alpha.P) = d(1.P, s_\alpha.P) = 1$.
  To prove the opposite inclusion it is enough to show that any
  $T$-fixed point $w.P \in \Gamma_1(X(u))$ is contained in $X(u w_P
  s_\alpha)$.  Since $T$ acts on the projective variety of all degree
  one curves from $w.P$ to $X(u)$, there exists a $T$-stable curve of
  this kind, hence we have $d(w.P, v.P) = 1$ for some fixed point $v.P
  \in X(u)$.  We may assume that $v, w \in W^P$, so that $v \leq u$.
  Proposition~\ref{prop:cmpfacts}(c) implies that we can write $v^{-1}
  w = x s_\alpha y$ where $x,y \in W_P$.  Since the assumption $X(u)
  \neq X$ implies that $u w_P \leq u w_P s_\alpha$, we obtain $w
  y^{-1} = v x s_\alpha \leq u w_P s_\alpha$, so $w.P \in X(u w_P
  s_\alpha)$ as required.
\end{proof}

\begin{prop}\label{prop:dx3xd}
  Let $d \in [0,d_X(2)]$.  Then we have $\Gamma_{d_X(3)-d}(X_d) = X$.
\end{prop}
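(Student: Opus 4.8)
The plan is to translate the statement into a question about joining three points with a connected rational curve of controlled degree, and then to use the homogeneity of the variety $X_d$ together with the definition of $d_X(3)$. Recall that $X_d = \Gamma_d(x_0,y_0)$ for a suitable pair $x_0,y_0$ with $d(x_0,y_0)=d$, and that by Proposition~\ref{prop:cmpfacts}(b) the stabilizer of $X_d$ in $G$ is a parabolic subgroup acting transitively on $X_d$. To show $\Gamma_{d_X(3)-d}(X_d) = X$, I would fix an arbitrary point $z \in X$ and exhibit a rational curve of degree at most $d_X(3)-d$ from $z$ to some point of $X_d$; since $X_d$ is irreducible and $\Gamma_{d_X(3)-d}(X_d)$ is a $B$-stable Schubert variety (by Lemma~\ref{lem:linechain} and Proposition~\ref{prop:rat2gw}(b)), it suffices to reach a dense subset of $z$'s, so I may take $z$ general.

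The key idea is the following. By the definition of $d_X(3)$, the three points $x_0$, $y_0$, $z$ lie on a connected rational curve $C$ of degree $d_X(3)$. Break $C$ (or rather, find inside $C$ a subcurve) into two connected pieces: one subcurve $C_1$ of some degree $a$ passing through $x_0$ and $y_0$, and a complementary connected subcurve $C_2$ of degree $d_X(3)-a$ joining $C_1$ to $z$. One then wants $a \ge d$, for then $C_1$ is a connected rational curve of degree $a\ge d=d(x_0,y_0)$ through $x_0,y_0$; by Proposition~\ref{prop:cmpfacts} combined with the construction of $Y_d$, any such curve is contained in some translate $\Gamma_d(x_0,y_0)$, in fact (using Lemma~\ref{lem:linechain} and the minimal-degree structure) it meets $X_d=\Gamma_d(x_0,y_0)$, so $C_2$ gives a curve of degree $d_X(3)-a \le d_X(3)-d$ from $z$ to $X_d$. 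The arithmetic bookkeeping here is to choose the splitting of $C$ so that the subcurve through $x_0,y_0$ has degree exactly $\ge d$; since $d(x_0,y_0)=d$ and $d \le d_X(2) \le d_X(3)$, a connected subcurve of $C$ joining $x_0$ to $y_0$ automatically has degree $\ge d$, and the rest of $C$ has degree $\le d_X(3)-d$ and still connects to $z$.

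To make this rigorous I would work with the chain description: by Lemma~\ref{lem:linechain} and the remark following it, joining points by connected rational curves of a given degree is equivalent to joining them by chains of lines, and the relevant $M_{\bd,\be}$ varieties are available. Concretely, choose a chain of lines realizing $C$, let the minimal sub-chain through $x_0$ and $y_0$ have degree $a\ge d(x_0,y_0)=d$, and let the complement be a chain of degree $d_X(3)-a$ attached to this sub-chain and reaching $z$. Then reparametrize: since $a \ge d$, the degree-$a$ sub-chain through $x_0,y_0$ passes through a point of $X_d$ — this uses that $\Gamma_a(x_0)\cap\Gamma_a(y_0)$ stabilizes for $a\ge d$ and, via the $Y_d$ construction, that $X_d$ is the common value (compare the discussion of $\cZ^*_{d,2}$ and $\Gamma^*_d(x)$ just before Lemma~\ref{lem:ratint2}). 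Hence $z$ lies in $\Gamma_{d_X(3)-a}$ of a point of $X_d$, so $z \in \Gamma_{d_X(3)-d}(X_d)$, giving the desired equality.

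The main obstacle I expect is the middle step: verifying precisely that a connected rational curve of degree $\ge d$ through $x_0$ and $y_0$ must meet (or be contained in a translate equal to) the \emph{specific} Schubert variety $X_d$, rather than some other translate $\Gamma_d(x,y)$. This needs the homogeneity of $Y_d$ from Proposition~\ref{prop:cmpfacts}(a)--(b) together with a genericity reduction so that one can move the configuration by an element of $B$ to land on the unique $B$-fixed point of $Y_d$; handling the degenerate cases where the subcurve through $x_0,y_0$ has degree strictly larger than $d$ (so one must first contract part of it) is where the argument requires care.
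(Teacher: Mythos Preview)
Your uniform approach has a real gap at the step where you pass from ``$C_2$ connects $z$ to $C_1$'' to ``$C_2$ connects $z$ to $X_d$''. The curve $C_1$ meets $X_d$ only trivially, at $x_0$ and $y_0$ (and perhaps a few other points), but the node where $C_2$ attaches to $C_1$ is an arbitrary point $p\in C_1$ and there is no reason for $p$ to lie in $X_d$. Your parenthetical claim that a degree-$a$ curve through $x_0,y_0$ with $a\ge d$ is contained in a translate of $\Gamma_d(x_0,y_0)$ is false already for $X=\bP^n$: a conic through two points is not contained in the line $X_1$ joining them. The alternative reading---that $\Gamma_a(x_0)\cap\Gamma_a(y_0)$ ``stabilizes'' at $X_d$---is also incorrect: these intersections strictly grow with $a$ until they fill $X$.

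What you would actually need is that the attaching point $p$, which satisfies $d(x_0,p)+d(y_0,p)\le a$, lies in $\Gamma_{a-d}(X_d)$; combined with $d(z,p)\le d_X(3)-a$ this would give $z\in\Gamma_{d_X(3)-d}(X_d)$. But that statement about $p$ is exactly the content of Corollary~\ref{cor:gammaschubert} (namely $\Gamma_a(x_0,y_0)=\Gamma_{a-d}(X_d)$), and the paper \emph{derives} Corollary~\ref{cor:gammaschubert} from Proposition~\ref{prop:dx3xd}, not the other way around. So invoking it here is circular. The paper instead proves Proposition~\ref{prop:dx3xd} by a direct case-by-case verification: for each cominuscule type it writes down $X_d$ explicitly (as a sub-Grassmannian, a $\Gr$/$\LG$/$\OG$ of a subquotient, a projective subspace of $Q^n$, or a specific Schubert variety in the exceptional cases) and exhibits, for an arbitrary $y\in X$, a point $x\in X_d$ with $d(x,y)\le d_X(3)-d$. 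No uniform splitting-of-curves argument is used, and I do not see how to repair yours without already knowing Corollary~\ref{cor:gammaschubert}.
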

\begin{proof}
  We check the truth of this statement case by case.  Assume first
  that $X = \Gr(m,n)$ is the type A Grassmannian of all
  $m$-dimensional subspaces in $\C^n$.  Set $k = n-m$.  Since
  $\Gr(m,n) \cong \Gr(k,n)$, we may assume that $m \leq k$.  It
  follows from \cite[Lemma 1]{buch:quantum} and
  \cite[Prop.~1]{buch.kresch.ea:gromov-witten} that for $x,y \in X$ we
  have $d(x,y) = \dim(x + y) - m$, where $x+y = \Span(x,y) \subset
  \C^n$.  It follows that $d_X(2) = m$.  We furthermore have $X_d =
  \Gr(d,B/A) = \{ x \in X \mid A \subset x \subset B \}$ for some
  $(A,B) \in \Fl(m-d,m+d;n)$.  We claim that $\Gamma_{k-d}(X_d) = X$.
  Let $y \in X$ be any point and notice that $\dim(B \cap y) \geq
  2m+d-n = m+d-k$.  Since $m+d-k \leq d$, there exists a point $x \in
  X_d$ such that $\dim(x \cap y) \geq m+d-k$, or equivalently $d(x,y)
  \leq k-d$, as required.  We finally prove that $d_X(3) =
  \min(k,2m)$.  The inequality $d_X(3) \geq \min(k,2m)$ follows from
  \cite[Lemma~1]{buch:quantum}, and the opposite inequality follows
  from the claim and the observation that $\Gamma_{2m-d}(X_d) \supset
  \Gamma_m(\pt) = X$.  The identity $\Gamma_{d_X(3)-d}(X_d) = X$
  follows.

  Next assume that $X = \LG(n,2n)$ is the Lagrangian Grassmannian of
  maximal isotropic subspaces of a symplectic vector space $\C^{2n}$.
  For $x,y \in X$ we have $d(x,y) = \dim(x+y)-n$, which implies that
  $d_X(2) = n$, and $X_d = \LG(A^\perp/A) \subset X$ for some
  isotropic $A \subset \C^{2n}$ with $\dim(A) = n-d$.  In particular,
  we have $X_n = X$, so $d_X(3) = n$.  Let $y \in X$ be any point and
  set $x = (y \cap A^\perp) + A$.  Then $x \subset \C^{2n}$ is
  isotropic.  Write $\dim(y+x) = \dim(y+A) = n+t$.  Then $\dim(y \cap
  A^\perp) = \dim((y+A)^\perp) = n-t$ and $\dim(y \cap A) = n-t-d$.
  It follows that $\dim(x) = (n-t)+(n-d)-(n-t-d) = n$, so $x \in X_d$
  and $d(x,y) = t \leq n-d$, as required.
  
  Let $X = \OG(n,2n)$ be an orthogonal Grassmannian.  Given an
  orthogonal form on $\C^{2n}$ and a fixed maximal isotropic subspace
  $x_0 \subset \C^{2n}$, this space consists of all maximal isotropic
  subspaces $x \subset \C^{2n}$ such that $\dim(x+x_0)-n$ is even.  We
  have $d(x,y) = \frac{1}{2}(\dim(x+y)-n)$ for $x,y \in X$, hence
  $d_X(2) = \lfloor \frac{n}{2} \rfloor$, and $X_d = \OG(2d,A^\perp/A)
  \subset X$ for some isotropic subspace $A \subset \C^{2n}$ of
  dimension $n-2d$.  We claim that $d_X(3) = \lceil \frac{n}{2}
  \rceil$ and $\Gamma_{d_X(3)-d}(X_d) = X$.  Given any point $y \in
  X$, set $x = (y \cap A^\perp) + A$ and write $\dim(x+y) = n+t$.
  Then $\dim(x) = n$ and $t \leq n-2d$.  If $t$ is even, then $x \in
  X_d$ and $d(x,y) = \frac{t}{2} \leq \lfloor \frac{n}{2} \rfloor -
  d$.  Otherwise we can find a point $z \in X_d$ such that
  $\dim(x+z)=n+1$, and this implies that $d(z,y) = \frac{t+1}{2} \leq
  \lceil \frac{n}{2} \rceil - d$, proving the claim.
  
  Let $X = Q^n \subset \P(\C^{n+2})$ be a quadric hypersurface
  consisting of all isotropic lines through the origin in the vector
  space $\C^{n+2}$ equipped with an orthogonal form.  Then
  \cite[Prop.~18]{chaput.manivel.ea:quantum*1} shows that $d_X(2) = 2$
  and $X_2 = X$.  It follows that $d_X(3)=2$.  We must show that
  $\Gamma_1(X_1) = X$.  We have $X_1 = \bP(V)$ for some 2-dimensional
  isotropic subspace $V \subset \C^{n+2}$.  Given any point $y \in X$,
  choose a 1-dimensional subspace $x \subset V \cap y^\perp$.  Then $x
  \in X_1$, and since $x$ and $y$ are joined by the line $\bP(x+y)
  \subset Q^n$ we have $d(x,y)=1$, as required.
  
  Let $X = E_7/P_7$ be the Freudenthal variety.  In other words, $G$
  has type $E_7$ and $\alpha$ is the 7-th simple root of the Dynkin
  diagram:
  \[
  \pic{.4}{e7}
  \]

  According to \cite[Prop.~18]{chaput.manivel.ea:quantum*1} we have
  $d_X(2) = 3$ and $X_3 = X$, which implies that $d_X(3) = 3$.  The
  description of the varieties $X_d$ in terms of quivers given in
  \cite{chaput.manivel.ea:quantum*1} also reveals that $X_2 =
  X(s_7s_6s_5s_4s_2s_3s_4s_5s_6s_7)$.  (Alternatively, the variety
  $X_2 \cong Q^{10}$ can be identified as the unique non-singular
  Schubert variety in $X$ of dimension 10.)  By
  Lemma~\ref{lem:deg1nbhd} we now obtain $\Gamma_1(X_2) = X$.  We also
  have $X_1 = X(s_7)$, $\Gamma_1(X_1) =
  X(s_1s_3s_4s_2s_5s_4s_3s_1s_7s_6s_5s_4s_2s_3s_4s_5s_6s_7)$, and
  $\Gamma_1(\Gamma_1(X_1)) = X$.
  
  Finally, let $X = E_6/P_6$ be the Cayley plane, i.e.\ $G$ has type
  $E_6$ and $\alpha$ is the 6-th simple root of the Dynkin diagram
  obtained by discarding node 7 in the above diagram.  By
  \cite[Prop.~18]{chaput.manivel.ea:quantum*1} we have $d_X(2)=2$ and
  \cite[Lemma~2.14]{chaput.perrin:rationality} shows that $d_X(3)=4$.
  We obtain $d_X(3)-d \geq 2$, so $\Gamma_{d_X(3)-d}(X_d) \supset
  \Gamma_2(\pt) = X$.
\end{proof}

\begin{cor}\label{cor:gammaschubert}
  Let $x,y \in X$ and let $d \geq d(x,y)$.  Then we have
  $\Gamma_d(x,y) = \Gamma_{d-d(x,y)}(\Gamma_{d(x,y)}(x,y))$.  In
  particular, $\Gamma_d(x,y)$ is a Schubert variety in $X$.
\end{cor}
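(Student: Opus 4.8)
The plan is to reduce the identity $\Gamma_d(x,y) = \Gamma_{d-d(x,y)}(\Gamma_{d(x,y)}(x,y))$ to a statement about chains of lines, using Lemma~\ref{lem:linechain} and the curve-neighborhood identity $\Gamma_\bd(\Omega) = \Gamma_{|\bd|}(\Omega)$ recorded after that lemma. Write $e = d(x,y)$ and $\Gamma = \Gamma_e(x,y)$. The inclusion $\Gamma_{d-e}(\Gamma) \subseteq \Gamma_d(x,y)$ is immediate: any point $z \in \Gamma_{d-e}(\Gamma)$ lies on a degree $d-e$ rational curve meeting some $w \in \Gamma$, and $w$ lies on a degree $e$ rational curve through $x$ and $y$; concatenating these two curves with the curve of degree $0$ that carries the marked points gives a connected genus-zero stable map of degree $d$ with the first two marked points on the component through $x,y$ and the third at $z$, so $z \in \Gamma_d(x,y)$. (Here one uses that concatenation is possible, i.e.\ that $w$ can be chosen to lie on both curves, which is exactly what membership in the respective $\Gamma$'s provides.)

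For the reverse inclusion $\Gamma_d(x,y) \subseteq \Gamma_{d-e}(\Gamma)$, the key point is that $\Gamma_d(x,y) = \Gamma_{\bd}(x,y)$ for the degree sequence $\bd = (e, 1, 1, \dots, 1)$ with $d-e$ ones, by the consequence of Lemma~\ref{lem:linechain}; more precisely, any stable map contributing to $M_d(x,y)$ degenerates, using Lemma~\ref{lem:linechain} to break the first component into a chain of $e$ lines and the standard degeneration of a degree $d-e$ curve into $d-e$ lines, into a chain of $d$ lines with $x,y$ on the first and the third marked point on the last. Thus $\Gamma_d(x,y) = \ev_3(M_{\bd}(x,y))$. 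Now factor this chain after the first component: a point $z \in \Gamma_d(x,y)$ lies at the end of a chain whose first piece (the sub-chain of length $e$ through $x,y$) ends at some point $w$, and $w \in \Gamma_e(x,y) = \Gamma$ by definition, while the remaining sub-chain of length $d-e$ joins $w$ to $z$, so $z \in \Gamma_{d-e}(w) \subseteq \Gamma_{d-e}(\Gamma)$. This gives both inclusions and hence the claimed equality.

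The final assertion that $\Gamma_d(x,y)$ is a Schubert variety then follows by combining this with earlier results. By Proposition~\ref{prop:cmpfacts}(b), the stabilizer of $\Gamma = \Gamma_e(x,y)$ is a parabolic subgroup acting transitively on $\Gamma$; in particular $\Gamma$ is a translate of the distinguished $B$-stable Schubert variety $X_e$. Applying the equality just proved together with $\Gamma_{d-e}(\Gamma) = \Gamma_{(1,\dots,1)}(\Gamma)$ and Proposition~\ref{prop:rat2gw}(b) (using that $M_{(1,\dots,1)}$ is unirational by Proposition~\ref{prop:ratsingMde}, and the relevant evaluation maps are equivariant for a Borel subgroup stabilizing $\Gamma$), we get that $\Gamma_d(x,y)$ is the image of a $B$-stable subvariety under an equivariant map, hence a $B$-stable closed irreducible subvariety of $X$, i.e.\ a Schubert variety.

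The main obstacle I expect is the reverse inclusion, specifically justifying that a general member of $M_d(x,y)$ can be degenerated to the chain $\bd = (e,1,\dots,1)$ while keeping the first two marked points on the first component and the third on the last — equivalently, justifying the identity $\Gamma_d(x,y) = \Gamma_{(e,1,\dots,1)}(x,y)$. This requires knowing not just that $x$ and $y$ are joined by a chain of $e$ lines (Lemma~\ref{lem:linechain}) but that the curve-neighborhood $\Gamma_d$ is insensitive to replacing a degree by an effective partition of it into positive pieces, which is the remark following Lemma~\ref{lem:linechain}; one must check that remark applies with the first part equal to $e$ rather than $1$, and that no stability issue arises (it does not, since $e>0$ unless $x=y$, a trivial case). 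Handling the $x=y$ degenerate case separately, where $e=0$ and the statement is tautological, removes the only edge case.
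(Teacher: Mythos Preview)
Your proof has a genuine gap in the reverse inclusion. The remark following Lemma~\ref{lem:linechain} asserts $\Gamma_\bd(\Omega) = \Gamma_{|\bd|}(\Omega)$ only for a \emph{single} incidence condition $\Omega$. You invoke it with two conditions, claiming $\Gamma_d(x,y) = \Gamma_{(e,1,\dots,1)}(x,y)$, but this is not what the remark says and is not a formal consequence of Lemma~\ref{lem:linechain}. In the one-argument version, $\Gamma_d(\Omega)$ is simply the set of points within distance $d$ of $\Omega$, and Lemma~\ref{lem:linechain} guarantees such distances are realized by chains of lines. In the two-argument version, $\Gamma_d(x,y)$ is the set of $z$ lying on a degree-$d$ curve \emph{through both $x$ and $y$}; there is no a priori reason why such a $z$ must lie within distance $d-e$ of the locus $\Gamma_e(x,y)$ swept out by minimal-degree curves through $x$ and $y$. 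Your degeneration sketch (``break the first component into a chain of $e$ lines and degenerate the rest'') does not produce a chain with $x,y$ on a single degree-$e$ component: there is no ``first component'' of an irreducible curve to break, and an arbitrary degeneration of a degree-$d$ curve through $x,y,z$ need not place $x$ and $y$ on the same end. In effect, the identity $\Gamma_d(x,y) = \Gamma_{(e,1,\dots,1)}(x,y)$ you are trying to invoke \emph{is} the corollary (compare Corollary~\ref{cor:Zd}, which is derived from it).

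The paper's proof is quite different and does real work here. It does not attempt a uniform degeneration argument; instead it uses Proposition~\ref{prop:dx3xd}, the case-by-case verification that $\Gamma_{d_X(3)-d}(X_d)=X$ for each cominuscule $X$. When $d \geq d_X(3)$ this immediately gives $\Gamma_{d-e}(\Gamma_e(x,y)) = X \supseteq \Gamma_d(x,y)$. When $d \leq d_X(2)$, results of \cite{chaput.manivel.ea:quantum*1} place $x,y,z$ inside a translate of the smaller cominuscule variety $X_d$, where $d_{X_d}(3)=d$, and Proposition~\ref{prop:dx3xd} applies to $X_d$. The intermediate range $d_X(2) < d < d_X(3)$ occurs only for type~A Grassmannians and the Cayley plane, which are handled directly. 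You correctly flagged the reverse inclusion as the obstacle, but the resolution you propose (checking the remark ``applies with the first part equal to $e$'') misidentifies the issue: the problem is the number of imposed points, not the size of $d_0$.
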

\begin{proof}
  Let $z \in \Gamma_d(x,y)$.  We must show that $z \in
  \Gamma_{d-d_0}(\Gamma_{d_0}(x,y))$, where $d_0 = d(x,y)$.  If $d
  \geq d_X(3)$, then this follows from Proposition~\ref{prop:dx3xd}.
  On the other hand, if $d \leq d_X(2)$, then
  \cite[Prop.~19]{chaput.manivel.ea:quantum*1} implies that that $x,y,z$
  are contained in a translate of $X_d$.  It follows from
  \cite[Prop.~18]{chaput.manivel.ea:quantum*1} that $X_d$ is a
  cominuscule variety, and using
  \cite[Fact~20]{chaput.manivel.ea:quantum*1} we obtain $d_{X_d}(3) =
  d$.  It now follows from Proposition~\ref{prop:dx3xd} applied to
  $X_d$ that $z \in \Gamma_{d-d_0}(\Gamma_{d_0}(x,y))$.  Finally, if
  $d_X(2) < d < d_X(3)$, then $X$ is a Grassmannian of type A or the
  Cayley plane $E_6/P_6$.  We consider these cases in turn.

  If $X = \Gr(m,n)$ is a Grassmannian of type A, then
  \cite[Lemma~1]{buch:quantum} implies $x,y,z \in X' := \Gr(m-a,B/A)$
  for some subspaces $A \subset B \subset \C^n$ such that $a :=
  \dim(A) \geq m-d$ and $\dim(B) \leq m+d$.  Since $d_{X'}(3) \leq d$
  we deduce that $z \in \Gamma_{d-d_0}(\Gamma_{d_0}(x,y))$ by applying
  Proposition~\ref{prop:dx3xd} to $X'$.

  Finally, assume that $X = E_6/P_6$ is the Cayley plane, in which
  case we have $d = 3$.  Since $\Gamma_2(X_1) = X$, we may also assume
  that $d_0 = 2$ and $\Gamma_2(x,y) = X_2$.  With the notation from
  the proof of Proposition~\ref{prop:dx3xd} we have $X_2 =
  X(s_6s_5s_4s_2s_3s_4s_5s_6)$, the only non-singular Schubert variety
  of dimension 8, and Lemma~\ref{lem:deg1nbhd} implies that
  $\Gamma_1(X_2)$ is the unique Schubert divisor in $X$.  Since
  $d_X(3)=4$, we also have $\Gamma_1(X_2) \subset \Gamma_3(x,y)
  \subsetneq X$.  We conclude that $\Gamma_1(X_2) = \Gamma_3(x,y)$ as
  $\Gamma_3(x,y)$ is irreducible.
\end{proof}

\begin{cor}\label{cor:Zd}
  Let $\bd = (d_0,d_1,\dots,d_r)$ be a sequence with $d_i>0$ for
  $i>0$.  Then $\cZ_\bd = \{ (x,y,z) \in \cZ_{d_0,2} \times X \mid z
  \in \Gamma_{|\bd|}(x,y) \}$.
\end{cor}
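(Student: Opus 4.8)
The plan is to prove the two inclusions separately, using Lemma~\ref{lem:linechain} to reduce from an arbitrary sequence $\bd$ to a short chain, and then Corollary~\ref{cor:gammaschubert} to control the neighborhood $\Gamma_{|\bd|}(x,y)$. First I would fix notation: write $d_0$ for the first degree of $\bd$ and $|\bd| = \sum d_i$, and recall that by the inductive definition, a general point of $M_\bd \subset \Mb_{0,3}(X,|\bd|)$ is a stable map whose domain is a chain of $r+1$ projective lines, with the first two marked points on the first component (mapping with degree $d_0$) and the third marked point on the last component; the total map has degree $|\bd|$. Thus $\ev(M_\bd) = \cZ_\bd$ consists exactly of the triples $(x,y,z)$ such that there is such a chain through $x$, $y$ on its first component and $z$ on its last component. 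Both inclusions amount to identifying this set of triples with $\{(x,y,z) \in \cZ_{d_0,2} \times X \mid z \in \Gamma_{|\bd|}(x,y)\}$.

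For the inclusion $\subseteq$, let $(x,y,z) \in \cZ_\bd$. Then $x$ and $y$ lie on the first component of the chain, which is a degree-$d_0$ curve (possibly a point if $d_0 = 0$), so $(x,y) \in \cZ_{d_0,2}$ by definition. Next, the entire chain is a connected rational curve of degree $|\bd|$ through $x$, $y$, and $z$; restricting attention to a point $p$ on the first component, we see $x,y,p$ lie on the first component of degree $d_0$, and $p,z$ are connected by a degree-$(|\bd|-d_0)$ chain. This only shows $z \in \Gamma_{|\bd|-d_0}(\Gamma_{d_0}(x))$, which by Lemma~\ref{lem:linechain} (applied after passing to $T$-fixed points, together with the paragraph following it saying $\Gamma_{\bd'}(\Omega) = \Gamma_{|\bd'|}(\Omega)$) equals $\Gamma_{|\bd|}(x)$; but we need it inside $\Gamma_{|\bd|}(x,y)$. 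The cleanest argument is: the whole chain $C$ has degree $|\bd|$ and passes through $x$, $y$, $z$, so $z \in \Gamma_{|\bd|}(x,y)$ directly (the chain $C$ itself is the required connected rational curve of degree $|\bd|$ through all three points, and $z$ lies on it), giving $(x,y,z)$ in the right-hand set.

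For the reverse inclusion $\supseteq$, suppose $(x,y) \in \cZ_{d_0,2}$ and $z \in \Gamma_{|\bd|}(x,y)$; I must produce a point of $M_\bd$ mapping to $(x,y,z)$. Since $(x,y) \in \cZ_{d_0,2}$, there is a connected rational curve of degree $d_0$ through $x$ and $y$; by Lemma~\ref{lem:linechain} one may even take it to be a chain of $d_0$ lines, and after padding with more line components (reassigning their degrees to match $\bd$) or by a direct stable-maps argument one gets a stable map in $M_{(d_0,1,\dots,1)}$ — but it is easier to argue as follows. By Corollary~\ref{cor:gammaschubert} (applicable since $z \in \Gamma_{|\bd|}(x,y)$ and $|\bd| \geq d_0 \geq d(x,y)$), we have $z \in \Gamma_{|\bd|-d_0}(\Gamma_{d_0}(x,y))$, so there is a point $w \in \Gamma_{d_0}(x,y)$ and a connected rational curve of degree $|\bd|-d_0$ from $w$ to $z$; by Lemma~\ref{lem:linechain} the latter can be taken to be a chain of $|\bd|-d_0$ lines. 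Meanwhile $w \in \Gamma_{d_0}(x,y)$ gives a degree-$d_0$ rational curve through $x$, $y$ (and $w$). Gluing the degree-$d_0$ curve to the chain of lines at $w$, and then redistributing degrees among the components to match $\bd = (d_0,d_1,\dots,d_r)$ (collapsing line components to nodes where $d_i$ accounts for several lines, or the reverse), produces a stable map with domain a chain of $r+1$ components of degrees $\bd$, with $x$, $y$ on the first and $z$ on the last; this is a point of $M_\bd$ mapping to $(x,y,z)$. The main obstacle is this last bookkeeping step: one must check that the chain-of-lines model can always be massaged into a chain with exactly the degree sequence $\bd$ while keeping stability and the marked-point positions, which uses the stability condition $e_i \geq 1 + \delta_{i,0} + \delta_{i,r}$ when $d_i = 0$ and the fact that $M_\bd$ is defined as a closure (so we only need a general such map, or a limit of them).
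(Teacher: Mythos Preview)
Your approach is essentially the paper's: both directions hinge on Corollary~\ref{cor:gammaschubert} and Lemma~\ref{lem:linechain}, and your inclusion $\subseteq$ is fine. For $\supseteq$, however, you make things harder than necessary and mis-state one step. Corollary~\ref{cor:gammaschubert} as written only gives $\Gamma_{|\bd|}(x,y)=\Gamma_{|\bd|-d(x,y)}(\Gamma_{d(x,y)}(x,y))$, not the version with $d_0$ in place of $d(x,y)$. To get $z\in\Gamma_{|\bd|-d_0}(\Gamma_{d_0}(x,y))$ you must also apply Corollary~\ref{cor:gammaschubert} to $\Gamma_{d_0}(x,y)$ and then use the identity $\Gamma_a(\Gamma_b(\Omega))=\Gamma_{a+b}(\Omega)$ (the paragraph after Lemma~\ref{lem:linechain}) to recombine.

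More importantly, the ``redistribution'' you flag as the main obstacle is a phantom. You do not need to massage an explicit chain of lines into one with profile $(d_0,d_1,\dots,d_r)$. Because $M_\bd$ is built as an iterated fiber product of \emph{full} moduli spaces $\Mb_{0,\cdot}(X,d_i)$, each ``component'' in the chain may itself be a reducible stable map; so once you know $z\in\Gamma_{|\bd|-d_0}(\Gamma_{d_0}(x,y))$, the paragraph after Lemma~\ref{lem:linechain} applied to $\Omega=\Gamma_{d_0}(x,y)$ with the sequence $(d_1,\dots,d_r)$ gives $z\in\Gamma_{(d_1,\dots,d_r)}(\Gamma_{d_0}(x,y))=\Gamma_\bd(x,y)$ directly. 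This is exactly the paper's argument: it simply asserts $\Gamma_\bd(x,y)=\Gamma_{|\bd|}(x,y)$ whenever $(x,y)\in\cZ_{d_0,2}$, citing Corollary~\ref{cor:gammaschubert} and Lemma~\ref{lem:linechain}. Your talk of ``collapsing line components to nodes'' would not literally work (you cannot contract a degree-$1$ component and keep the degree), and the closure argument you sketch is not needed.
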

\begin{proof}
  By definition we have $\cZ_\bd = \{ (x,y,z) \in X^3 \mid z \in
  \Gamma_\bd(x,y) \}$.  The corollary is true because $\Gamma_\bd(x,y)
  \neq \emptyset$ if and only if $(x,y) \in \cZ_{d_0,2}$, in which
  case we have $\Gamma_\bd(x,y) = \Gamma_{|\bd|}(x,y)$ by
  Corollary~\ref{cor:gammaschubert} and Lemma~\ref{lem:linechain}.
\end{proof}

We need the following rationality property of Gromov-Witten varieties
defined by triples of points, which in some cases are in special
position \cite[Thm.~0.2]{chaput.perrin:rationality}.  Notice that when
$X$ is a Grassmannian of type A, this follows from
\cite[Thm.~2.1]{buch.mihalcea:quantum}.

\begin{thm}[\cite{chaput.perrin:rationality}]\label{thm:rat3gw}
  Let $X = G/P$ be a cominuscule variety and let $d \in \N$.  Then the
  variety $M_{d,3}(x,y,z) \subset \Mb_{0,3}(X,d)$ is rational for all
  points $(x,y,z)$ in a dense open subset of $\cZ_{d,3} \subset X^3$.
\end{thm}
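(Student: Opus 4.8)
The plan is to produce an explicit birational model of a general fiber of $\ev\colon M_{d,3}=\Mb_{0,3}(X,d)\to X^3$ over $\cZ_{d,3}$, thereby proving it rational. By Corollary~\ref{cor:gwratsing} these fibers are irreducible with rational singularities, and $\Mb_{0,3}(X,d)$ is itself rational \cite{kim.pandharipande:connectedness}; since $\ev$ is equivariant for the diagonal $G$-action and $\cZ_{d,3}$ is an irreducible $G$-variety, it suffices to treat one conveniently chosen general triple $(x,y,z)$. I would organize the argument as a case analysis on $d$. If $d\leq d_X(2)$, then by \cite[Prop.~19]{chaput.manivel.ea:quantum*1} a general triple of $\cZ_{d,3}$ lies in a translate $X'$ of the cominuscule variety $X_d$ of \S\ref{sec:comin}, and by \cite[Fact~20, Prop.~18]{chaput.manivel.ea:quantum*1} one has $d_{X'}(3)=d$; since every degree-$d$ rational curve through $x,y,z$ is contained in $X'$, this gives $M^X_{d,3}(x,y,z)=M^{X'}_{d,3}(x,y,z)$, reducing to the case $d=d_{X'}(3)$. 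The intermediate range $d_X(2)<d<d_X(3)$ occurs only for type-$A$ Grassmannians and the Cayley plane $E_6/P_6$, and is handled the same way using \cite[Lemma~1]{buch:quantum} and the arguments in the proof of Corollary~\ref{cor:gammaschubert} to place $x,y,z$ in a smaller cominuscule subvariety $X'$ with $d_{X'}(3)=d$. Hence it remains to treat $d\geq d_X(3)$, where $\cZ_{d,3}=X^3$ and $(x,y,z)$ may be taken in the dense open $G$-orbit of $X^3$ (Popov's theorem).

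\emph{The base case $d=d_X(3)$.} Here I would argue type by type, using the kernel--span description of rational curves in cominuscule spaces and the homogeneous spaces $Y_e$, $X_e$ from \S\ref{sec:comin}. For $X=\Gr(m,m+k)$ a general degree-$d$ curve $C$ through $x,y,z$ carries a kernel $K=\bigcap_{p\in C}p$ and span $S=\sum_{p\in C}p$ whose dimensions are forced by $\dim(x\cap y\cap z)$ and $\dim(x+y+z)$; the admissible pairs $(K,S)$ form a single flag bundle over a partial flag variety, and once $(K,S)$ is fixed $C$ is the unique degree-$d$ curve in $\Gr(d,S/K)\cong\Gr(d,2d)$ through the three induced points \cite{buch.kresch.ea:gromov-witten,buch.mihalcea:quantum}. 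This exhibits $M_{d,3}(x,y,z)$ as birational to an iterated Grassmann bundle over a flag variety, hence rational. The Lagrangian and orthogonal Grassmannians and the quadric $Q^n$ are treated by the analogous isotropic kernel--span construction, and the Cayley plane and Freudenthal variety by the quiver model of \cite{chaput.manivel.ea:quantum*1} together with the explicit identification of the relevant Schubert varieties $X_e$ recorded in the proof of Proposition~\ref{prop:dx3xd}.

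\emph{Induction on $d$ for $d>d_X(3)$.} Since $\cZ_{d,3}=X^3$, a general point of $M_{d,3}(x,y,z)$ is an immersed irreducible degree-$d$ rational curve, and I would degenerate it, by a first-order smoothing of a node, to the union of a degree-$(d-1)$ rational curve through $x,y,z$ and a line attached at a general point. Let $\ell$ denote a line class. A dimension count using $d=(d-1)+1$, $\dim M_{d-1,3}(x,y,z)=\dim M_{d,3}(x,y,z)-\langle c_1(TX),\ell\rangle$, $\dim\Gamma_1(\mathrm{pt})=\langle c_1(TX),\ell\rangle-1$, and that the space of lines through a fixed point of $X$ has dimension $\langle c_1(TX),\ell\rangle-2$, shows that the corresponding boundary stratum is a divisor in $M_{d,3}(x,y,z)$; since $X$ is homogeneous this stratum consists of smooth points of $\Mb_{0,3}(X,d)$, so smoothing the node realizes $M_{d,3}(x,y,z)$ as birational to this stratum times $\mathbb{A}^1$. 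The stratum itself maps to $M_{d-1,3}(x,y,z)$, which is rational by induction (base case $d=d_X(3)$); its total space is birational to the universal curve over $M_{d-1,3}(x,y,z)$ (a $\bP^1$-bundle with sections, hence rational over a rational base) with the locally trivial fibration $\Mb_{0,1}(X,1)\to X$ of Proposition~\ref{prop:loctriv} pulled back along the universal map, so it is birational to a product of rational varieties, hence rational. This closes the induction.

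The main obstacle is the base case $d=d_X(3)$: making the kernel--span and quiver constructions precise in each cominuscule type, and in particular checking that the kernel--span data of a general three-pointed curve varies in a \emph{homogeneous} family compatible with all three incidence conditions at once --- exactly the point where the Lagrangian Grassmannian (and, in the intermediate range, the Cayley plane) require a separate argument, as in \cite{chaput.perrin:rationality}. A secondary point needing care in the inductive step is the deformation-theoretic claim that smoothing the node is a versal direction transverse to the boundary stratum, so that the local $\mathbb{A}^1$-factor, and hence rationality, genuinely propagates.
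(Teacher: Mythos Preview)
This theorem is not proved in the present paper: it is quoted from \cite{chaput.perrin:rationality} (and, for type~A, from \cite{buch.mihalcea:quantum}) and used as a black box in Theorem~\ref{thm:rat3bdgw}. So there is no proof here to compare against; one can only assess your argument on its own terms.

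On those terms, the inductive step for $d>d_X(3)$ has a genuine gap. You correctly locate a boundary divisor $D\subset M_{d,3}(x,y,z)$ of nodal curves of type $(d-1)+1$, and your dimension count is fine. But the assertion that ``smoothing the node realizes $M_{d,3}(x,y,z)$ as birational to this stratum times $\mathbb{A}^1$'' does not follow. Versality of the smoothing gives you the normal direction to $D$ at a general point, hence an \'etale or formal tubular neighborhood isomorphic to $D\times\mathbb{A}^1$; it does \emph{not} give a Zariski-open subset of $M_{d,3}(x,y,z)$ birational to $D\times\mathbb{A}^1$. A smooth projective variety containing a rational divisor need not be rational (e.g.\ a K3 surface containing a $(-2)$-curve), so rationality of $D$ alone cannot close the induction. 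What you would actually need is a dominant rational map from $M_{d,3}(x,y,z)$ to a rational base with rational general fiber, and the boundary stratum does not furnish one. Your caveat that this is ``a secondary point needing care'' undersells it: this is where the argument breaks.

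A smaller issue: in the intermediate range for the Cayley plane ($d=3$), the argument you invoke from Corollary~\ref{cor:gammaschubert} does not place $(x,y,z)$ inside a smaller cominuscule $X'$ with $d_{X'}(3)=3$; it instead identifies $\Gamma_3(x,y)$ with the Schubert divisor. So the reduction to the base case is not available there as you state it, and a separate argument is required --- which is indeed how \cite{chaput.perrin:rationality} proceeds, handling the exceptional and Lagrangian cases by direct construction rather than by a uniform induction on $d$.
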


\begin{thm}\label{thm:rat3bdgw}
  Let $X = G/P$ be cominuscule and $\bd = (d_0,\dots,d_r)$ any
  sequence with $d_i>0$ for $i>0$.  Then $M_{\bd}(x,y,z)$ is
  rationally connected for all points $(x,y,z)$ in a dense open subset
  of $\cZ_\bd \subset X^3$.
\end{thm}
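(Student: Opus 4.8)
The plan is to induct on $r$, using the Graber–Harris–Starr criterion (Theorem~\ref{thm:ratconn}) to pass from a known rationality statement for one component to rational connectedness of the chain. The base case $r=0$ is exactly Theorem~\ref{thm:rat3gw}: for a dense open set of $(x,y,z) \in \cZ_{d_0,3}$, the variety $M_{d_0,3}(x,y,z)$ is rational, hence rationally connected since it is complete. For the inductive step I would use the Cartesian description of $M_\bd$ as the fiber product of $\ev_{|\be'|} : M_{\bd',\be'} \to X$ and $\ev_1 : M_{d_r,2} \to X$ over $X$, where $\bd' = (d_0,\dots,d_{r-1})$. Writing $d = |\bd|$ and $d' = |\bd'|$, a point of $M_\bd(x,y,z)$ consists of a stable map in $M_{\bd'}(x,y)$ whose last component passes through some point $w \in \Gamma_{d'}(x,y)$, together with a degree-$d_r$ rational curve joining $w$ to $z$. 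So I would stratify $M_\bd(x,y,z)$ by the value $w \in \Gamma_{d'}(x,y)$: there is a natural map
\[
M_\bd(x,y,z) \longrightarrow \Gamma_{d'}(x,y) \cap \Gamma_{d_r}(z)
\]
sending a stable map to the node $w$ joining the $\bd'$-part to the last component, and the fiber over $w$ is $M_{\bd'}(x,y,w) \times M_{d_r,2}(w,z)$.

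The next step is to check that, for $(x,y,z)$ general in $\cZ_\bd$, the base $\Gamma_{d'}(x,y) \cap \Gamma_{d_r}(z)$ of this map is rationally connected and that the general fiber is rationally connected. By Corollary~\ref{cor:gammaschubert} and Lemma~\ref{lem:linechain}, $\Gamma_{d'}(x,y) = \Gamma_{d_0}(x,y)$ up to reindexing by chain length, and more to the point $\Gamma_{d'}(x,y) \cap \Gamma_{d_r}(z)$ is an intersection of two Schubert varieties in appropriate translates — for generic $(x,y,z)$ this is a (nonempty, by the definition of $\cZ_\bd$) Richardson-type variety, which is rational. Using Proposition~\ref{prop:cmpfacts} and Lemma~\ref{lem:ratint2}(b),(c) I would show that for $(x,y,z)$ in a suitable dense open subset of $\cZ_\bd$, the point $z$ lies in the open cell of $\Gamma_{d_r}(\Gamma_{d'}(x,y))$, so that $\Gamma_{d'}(x,y) \cap \Gamma_{d_r}(z)$ is unirational, hence rationally connected. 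For the fibers: the factor $M_{d_r,2}(w,z)$ is the space of degree-$d_r$ curves through two points, and by Lemma~\ref{lem:ratint2} (or directly, Theorem~\ref{thm:rat3gw} with a forgotten marked point) this is rational for $(w,z)$ general; the factor $M_{\bd'}(x,y,w)$ is rationally connected for $(x,y,w)$ general by the inductive hypothesis. A product of rationally connected varieties is rationally connected, so the general fiber is rationally connected. Finally, since $M_\bd(x,y,z)$ has rational singularities (Corollary~\ref{cor:unirat}), it is normal and in particular irreducible when the stratifying map above is surjective with connected base and connected general fiber; applying Theorem~\ref{thm:ratconn} to the (dominant, since the stratification is by construction onto) map $M_\bd(x,y,z) \to \Gamma_{d'}(x,y) \cap \Gamma_{d_r}(z)$ yields that $M_\bd(x,y,z)$ is rationally connected.

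The main obstacle is the genericity bookkeeping: I need to produce a \emph{single} dense open subset of $\cZ_\bd$ over which simultaneously (i) the base $\Gamma_{d'}(x,y) \cap \Gamma_{d_r}(z)$ is unirational, (ii) the inductive hypothesis applies to $M_{\bd'}(x,y,w)$ for $w$ in a dense open subset of that base, and (iii) $M_{d_r,2}(w,z)$ is rational for those same $w$. Condition (ii) is delicate because the inductive hypothesis gives rational connectedness only for $(x,y,w)$ in a dense open subset of $\cZ_{\bd'}$, and I must ensure that for generic $(x,y,z) \in \cZ_\bd$ the slice $\{w : (x,y,w) \in \text{that open set}\}$ is still dense in $\Gamma_{d'}(x,y) \cap \Gamma_{d_r}(z)$ — this is where the $G$-equivariance of all the constructions and the transitivity statements in Proposition~\ref{prop:cmpfacts}(a),(b) do the real work, letting me intersect $G$-stable dense opens and restrict to a single orbit. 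I expect this to be handled cleanly by observing that $\cZ_\bd \to \cZ_{d_0,2}$ is $G$-equivariant with $G$ acting transitively on $\cZ^*_{d_0,2}$, so it suffices to check everything over one basepoint $(x,y)$ and then vary $z$ in the fiber $\Gamma_{|\bd|}(x,y)$, where the relevant conditions cut out dense open subsets by Lemma~\ref{lem:ratint2}.
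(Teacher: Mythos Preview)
Your approach mirrors the paper's proof closely: both induct on $r$, use the fiber-product description to get a map $M_\bd(x,y,z) \to \Gamma_{\bd'}(x,y) \cap \Gamma_{d_r}(z)$ whose fiber over $t$ is $M_{\bd'}(x,y,t) \times M_{d_r,2}(t,z)$, establish rational connectedness of the general fiber and unirationality of the base via Lemma~\ref{lem:ratint2}, and apply Theorem~\ref{thm:ratconn}. Your last paragraph correctly identifies the genericity bookkeeping and its resolution via $G$-transitivity on $\cZ^*_{d_0,2}$ (Proposition~\ref{prop:cmpfacts}(a)), which is exactly what the paper does.

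There is, however, a real gap in your irreducibility argument. First, Corollary~\ref{cor:unirat} gives rational singularities of $M_{\bd,\be}(\Omega_1,\dots,\Omega_m)$ only for Schubert varieties in \emph{general position in $X^m$}; when $\cZ_\bd \subsetneq X^3$, a triple $(x,y,z) \in \cZ_\bd$ is never in general position in this sense, so the corollary is vacuous there. (A valid substitute is Lemma~\ref{lem:ratsingfib} applied to $\ev: M_\bd \to \cZ_\bd$, using Proposition~\ref{prop:ratsingMde}.) Second, and more seriously, normality together with ``surjective map to a connected base with connected general fiber'' does \emph{not} force the source to be connected: an equidimensional normal variety can have an extra component mapping onto a proper closed subvariety of the base without affecting the general fiber (e.g.\ two disjoint copies of $\bP^1\times\bP^1$ mapping to $\bP^1$, one by projection and one by a constant map). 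The paper handles irreducibility by a different route: it first shows $M_\bd(x,y)$ is \emph{unirational} (hence irreducible), using that $\ev_2: M_\bd(x) \to \Gamma_{d_0}(x)$ is locally trivial over the open cell (Proposition~\ref{prop:rat2gw}(b)); then it applies Kleiman--Bertini to $\ev_3 : M_\bd(x,y) \to \Gamma_\bd(x,y)$ via the Borel action on the open cell of the target (a Schubert variety by Corollary~\ref{cor:gammaschubert}) to obtain local irreducibility of $M_\bd(x,y,z)$; finally it uses the Stein factorization of $M_\bd(x,y,z) \to \Omega \cap \Gamma_{d_r}(z)$ together with normality of this base to conclude connectedness. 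Your irreducibility paragraph needs to be replaced by an argument of this shape.

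A minor point: unirationality of $M_{d_r,2}(w,z)$ does not come from Lemma~\ref{lem:ratint2} or from Theorem~\ref{thm:rat3gw} with a forgotten point; the paper obtains it from Proposition~\ref{prop:rat2gw}(b) applied to $\ev_2: M_{d_r,2}(z) \to \Gamma_{d_r}(z)$, which is locally trivial over the open cell.
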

\begin{proof}
  The result follows from Theorem~\ref{thm:rat3gw} when $r=0$, so
  assume that $r>0$.  By induction on $r$ we may assume that
  $M_{\bd'}(x,y,t)$ is rationally connected for all points $(x,y,t)$
  in a dense open subset of $\cZ_{\bd'}$, where $\bd' =
  (d_0,\dots,d_{r-1})$.  Let $(x,y) \in \cZ^*_{d_0,2}$ and set $\Omega
  = \Gamma_{\bd'}(x,y)$.  Since $G$ acts transitively on
  $\cZ^*_{d_0,2}$ by Proposition~\ref{prop:cmpfacts}(a), there exists
  a dense open subset $\Omega^* \subset \Omega$ such that
  $M_{\bd'}(x,y,t)$ is rationally connected for all $t \in \Omega^*$.
  By Lemma~\ref{lem:ratint2} there exists a dense open subset
  $\Gamma^* \subset \Gamma_\bd(x,y) = \Gamma_{d_r}(\Omega)$ such that
  $\Omega^* \cap \Gamma^*_{d_r}(z) \neq \emptyset$ and $\Omega \cap
  \Gamma_{d_r}(z)$ is unirational and normal for all $z \in \Gamma^*$.

  By replacing $\Gamma^*$ with a smaller set, we may also assume that
  $M_\bd(x,y,z)$ is locally irreducible for all $z \in \Gamma^*$.
  Indeed, since $M_\bd(x)$ is unirational by
  Corollary~\ref{cor:unirat}, and Proposition~\ref{prop:rat2gw}(b)
  implies that $\ev_2 : M_\bd(x) \to \Gamma_{d_0}(x)$ is locally
  trivial over $\Gamma^*_{d_0}(x)$, it follows that $M_\bd(x,y)$ is
  unirational.  Since $\Gamma_\bd(x,y)$ is a Schubert variety by
  Corollary~\ref{cor:gammaschubert} and $\ev_3 : M_\bd(x,y) \to
  \Gamma_\bd(x,y)$ is surjective, the Kleiman-Bertini theorem
  \cite[Rmk.~7]{kleiman:transversality} applied to the Borel-action on
  the open cell $\Gamma_\bd(x,y)^\circ$ shows that $M_\bd(x,y,z)$ is
  locally irreducible for all points $z$ in a dense open subset of
  $\Gamma_\bd(x,y)$.

  We claim that $M_\bd(x,y,z)$ is rationally connected for all $z \in
  \Gamma^*$.  The space $M_\bd$ is the product of the maps $\ev_3 :
  M_{\bd'} \to X$ and $\ev_1 : M_{d_r,2} \to X$.  Let $f : M_\bd \to
  X$ be the morphism defined by the product.  This map restricts to a
  surjective morphism $M_\bd(x,y,z) \to \Omega \cap \Gamma_{d_r}(z)$,
  whose fibers are given by $f^{-1}(t) \cap M_\bd(x,y,z) =
  M_{\bd'}(x,y,t) \times M_{d_r,2}(t,z)$.  Since $M_{d_r,2}(z)$ is
  unirational and Proposition~\ref{prop:rat2gw}(b) implies that the
  map $\ev_2 : M_{d_r,2}(z) \to \Gamma_{d_r}(z)$ is locally trivial
  over $\Gamma^*_{d_r}(z)$, it follows that $M_{d_r,2}(t,z)$ is
  unirational for all $t \in \Gamma^*_{d_r}(z)$.  We deduce that
  $f^{-1}(t) \cap M_\bd(x,y,z)$ is rationally connected for all $t \in
  \Omega^* \cap \Gamma^*_{d_r}(z)$.  By using the Stein factorization
  of the map $M_\bd(x,y,z) \to \Omega \cap \Gamma_{d_r}(z)$ and the
  fact that $\Omega \cap \Gamma_{d_r}(z)$ is normal, it follows from
  Zariski's main theorem \cite[III.11.4]{hartshorne:algebraic*1} that
  $M_\bd(x,y,z)$ is connected and therefore irreducible.  The claim
  now follows from Theorem~\ref{thm:ratconn}.
  
  Define a morphism $\rho : G \times \Gamma_\bd(x,y) \to \cZ_\bd$ by
  $\rho(g,z) = (g.x, g.y, g.z)$.  It follows from
  Proposition~\ref{prop:cmpfacts}(a) that the image of $\rho$ contains
  $\cZ_\bd \cap (\cZ^*_{d_0,2} \times X)$, which is a dense open
  subset of $\cZ_\bd$.  This implies that $\rho(G \times \Gamma^*)$
  contains a dense open subset of $\cZ_\bd$, which completes the proof
  of the theorem.
\end{proof}

\section{Quantum $K$-theory of cominuscule
  varieties}\label{sec:qkcomin}

Let $X = G/P$ be a cominuscule variety and let $K(X)$ denote its
Grothendieck ring.  A short summary of the properties of this ring can
be found in \cite[\S 3]{buch.ravikumar:pieri}, while many more details
can be found in \cite{brion:lectures}.  Each element $w \in W^P$
defines a Schubert class $\cO_w = [\cO_{X(w_0 w)}] \in K(X)$, where
$w_0 \in W$ is the longest element, and these classes form a
$\Z$-basis for $K(X)$.  The dual Schubert classes $\cO_w^\vee \in
K(X)$ are defined by $\euler{X}(\cO_u \cdot \cO_v^\vee) =
\delta_{u,v}$ for $u,v \in W^P$, where $\euler{X} : K(X) \to \Z$ is
the sheaf Euler characteristic map.

Given classes $\alpha_1, \dots, \alpha_n \in K(X)$, we set $\alpha_1
\otimes \dots \otimes \alpha_n = \prod_{i=1}^n \pi_i^*(\alpha_i) \in
K(X^n)$, where $\pi_i : X^n \to X$ is the $i$-th projection.  Together
with a degree $d \in \N$, these classes define the $K$-theoretic
Gromov-Witten invariant
\[
I_d(\alpha_1,\dots,\alpha_n) = \euler{M_{d,n}}(\ev^*(\alpha_1 \otimes
\dots \otimes \alpha_n)) \,.
\]
The quantum $K$-theory ring of $X$ is an algebra over $\Z[[q]]$, which
as a $\Z[[q]]$-module is given by $\QK(X) = K(X) \otimes_\Z \Z[[q]]$.
The multiplicative structure is defined by
\[
\cO_u \star \cO_v = \sum_{w,d} N^{w,d}_{u,v}\, q^d\, \cO_w \,,
\]
where the sum is over all $w \in W^P$ and $d \in \N$.  The structure
constants $N^{w,d}_{u,v}$ are defined by
\[
N^{w,d}_{u,v} = \sum_{\bd=(d_0,\dots,d_r),\kappa_1,\dots,\kappa_r}
(-1)^r\, I_{d_0}(\cO_u, \cO_v, \cO_{\kappa_1}^\vee)\,
\prod_{i=1}^r I_{d_i}(\cO_{\kappa_i}, \cO_{\kappa_{i+1}}^\vee) \,,
\]
the sum over all sequences $\bd = (d_0,\dots,d_r)$ with $|\bd|=d$ and
$d_i>0$ for $i>0$, and all elements $\kappa_1,\dots,\kappa_r \in W^P$.
Notice that the sign $(-1)^r$ and the number of elements $\kappa_i$
depend on the length of $\bd$, and we write $\kappa_{r+1} = w$.  A
theorem of Givental \cite{givental:on} states that $\QK(X)$ is an
associative ring.

In this section we prove that any product of Schubert classes in
$\QK(X)$ has only finitely many non-zero terms.  We start by observing
that each structure constant $N^{w,d}_{u,v}$ can also be expressed as
an alternating sum of Euler characteristics computed on the spaces
$M_\bd$.  The following lemma generalizes to any homogeneous space
with the same proof.

\begin{lemma}\label{lem:bdgw}
  Let $u,v,w \in W^P$ and let $\bd = (d_0,d_1,\dots,d_r)$ be any
  sequence such that $d_i>0$ for $i>0$.  Then
  \[
  \euler{M_\bd}(\ev^*(\cO_u \otimes \cO_v \otimes \cO_w^\vee)) =
  \sum_{\kappa_1,\dots,\kappa_r}
  I_{d_0}(\cO_u,\cO_v,\cO_{\kappa_1}^\vee) \prod_{i=1}^r
  I_{d_i}(\cO_{\kappa_i},\cO_{\kappa_{i+1}}^\vee)
  \]
  where the sum is over all $\kappa_1,\dots,\kappa_r \in W^P$ and we
  set $\kappa_{r+1} = w$.
\end{lemma}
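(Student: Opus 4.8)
The plan is to prove the identity by induction on $r$, unwinding the inductive definition of $M_\bd$ as a fiber product over $X$ and applying the $K$-theoretic projection formula together with the splitting principle for the diagonal class of $X$. When $r=0$ there is nothing to prove, since the left side is $\euler{M_{d_0,3}}(\ev^*(\cO_u \otimes \cO_v \otimes \cO_w^\vee))$, which is by definition $I_{d_0}(\cO_u,\cO_v,\cO_w^\vee)$, matching the right side (an empty product of the $I_{d_i}$ factors, with $\kappa_1 = w$). So assume $r>0$ and set $\bd' = (d_0,\dots,d_{r-1})$.

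Recall that $M_\bd$ is the fiber product of $\ev_{|\be'|} : M_{\bd',\be'} \to X$ and $\ev_1 : M_{d_r,e_r+1} \to X$, where here $\be' = (2,0,\dots,0,2)$ and $e_r+1 = 2$, so the second factor is $M_{d_r,2}$. Denote by $p : M_\bd \to M_{d_r,2}$ and $q : M_\bd \to M_{\bd'}$ the two projections, so that $\ev_1 \circ p = \ev_{|\be'|} \circ q$ as maps $M_\bd \to X$; call this common map $f$. The first two marked points of $M_\bd$ come from $M_{\bd'}$ and the last marked point comes from $M_{d_r,2}$; thus $\ev^*(\cO_u \otimes \cO_v \otimes \cO_w^\vee) = q^*\,\ev_1^*(\cO_u)\,q^*\,\ev_2^*(\cO_v)\,p^*\,\ev_2^*(\cO_w^\vee)$ on $M_\bd$. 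I would first push forward along $p$ using the fiber-square identity $R p_* q^*(-) = \ev_1^*\,R(\ev_{|\be'|})_*(-)$ (valid because both spaces and the maps involved have rational singularities, so the higher direct images are compatible with flat base change in $K$-theory — this is where Corollary~\ref{cor:gwratsing}, Proposition~\ref{prop:ratsingMde} and the local triviality of $\ev_1$ from Proposition~\ref{prop:loctriv} enter). Combined with the projection formula this gives
\[
\euler{M_\bd}(\ev^*(\cO_u \otimes \cO_v \otimes \cO_w^\vee)) =
\euler{M_{d_r,2}}\bigl(\ev_1^*\,\pi_*(\ev_1^*\cO_u \cdot \ev_2^*\cO_v) \cdot \ev_2^*\cO_w^\vee\bigr),
\]
where $\pi = \ev_{|\be'|} = \ev_3 : M_{\bd'} \to X$ (using $\be' = (2,0,\dots,0,2)$, whose last marked point plays the role of the gluing point).

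Now I would expand $\pi_*(\ev_1^*\cO_u \cdot \ev_2^*\cO_v) \in K(X)$ in the basis $\{\cO_\kappa\}$: writing this class as $\sum_{\kappa_1 \in W^P} c_{\kappa_1}\,\cO_{\kappa_1}$, the coefficient is recovered by pairing with $\cO_{\kappa_1}^\vee$, and $\euler{X}(\cO_{\kappa_1}^\vee \cdot \pi_*(\ev_1^*\cO_u\cdot\ev_2^*\cO_v)) = \euler{M_{\bd'}}(\ev_1^*\cO_u\cdot\ev_2^*\cO_v\cdot\ev_3^*\cO_{\kappa_1}^\vee)$ by the projection formula, which by the inductive hypothesis equals $\sum_{\kappa_2,\dots,\kappa_r} I_{d_0}(\cO_u,\cO_v,\cO_{\kappa_2}^\vee)\prod_{i=2}^r I_{d_i}(\cO_{\kappa_i},\cO_{\kappa_{i+1}}^\vee)$ with $\kappa_{r+1}=w$ — wait, I must be careful with indexing: the induction on $\bd'=(d_0,\dots,d_{r-1})$ has length $r-1$, so it introduces elements $\kappa_1',\dots,\kappa_{r-1}'$ and reads $\euler{M_{\bd'}}(\ev^*(\cO_u\otimes\cO_v\otimes\cO_{\kappa_1}^\vee)) = \sum I_{d_0}(\cO_u,\cO_v,\cO_{\kappa_1'}^\vee)\prod_{i=1}^{r-1}I_{d_i}(\cO_{\kappa_i'},\cO_{\kappa_{i+1}'}^\vee)$ with $\kappa_r' = \kappa_1$. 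Substituting $c_{\kappa_1} = \euler{X}(\pi_*(\dots)\cdot \cO_{\kappa_1}^\vee)$ back, and then writing $\euler{M_{d_r,2}}(\ev_1^*\cO_{\kappa_1}\cdot \ev_2^*\cO_w^\vee) = I_{d_r}(\cO_{\kappa_1},\cO_w^\vee)$, we obtain exactly the claimed sum after relabeling $(\kappa_1',\dots,\kappa_{r-1}',\kappa_1)$ as $(\kappa_1,\dots,\kappa_{r-1},\kappa_r)$ and setting $\kappa_{r+1}=w$.

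The main obstacle is the base-change/projection-formula step: one needs that $R p_* \circ q^* = \ev_1^* \circ R\pi_*$ holds at the level of $K$-theory for the Cartesian square defining $M_\bd$. This is not automatic for arbitrary fiber squares, but here it follows because $\ev_1 : M_{d_r,2} \to X$ is a locally trivial fibration over the dense open $G$-orbit (Proposition~\ref{prop:loctriv}), $X$ is homogeneous so this orbit is all of $X$, and both $M_{\bd'}$ and $M_\bd$ have rational singularities (Proposition~\ref{prop:ratsingMde}); over the homogeneous base the square is Tor-independent, so the usual base-change theorem applies. Once this compatibility is in hand, the rest is bookkeeping with the projection formula and the $\{\cO_\kappa\},\{\cO_\kappa^\vee\}$ duality, which is routine. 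I would remark, as the statement already does, that nothing in this argument used that $X$ is cominuscule, only that $X = G/P$ is homogeneous.
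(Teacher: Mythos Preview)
Your argument is correct and follows the same inductive strategy as the paper: peel off the last factor $M_{d_r,2}$, apply flat base change and the projection formula across the fiber square, and then use induction. The paper packages the inductive step slightly differently: rather than computing $R\pi_*(\ev_1^*\cO_u\cdot\ev_2^*\cO_v)$ and expanding it in the Schubert basis, it writes the class of the diagonal as $\Delta_*[\cO_X]=\sum_\kappa \cO_\kappa^\vee\otimes\cO_\kappa$ in $K(X^2)$, pulls this back along the flat map $\ev_3\times\ev_1:M_{\bd'}\times M_{d_r,2}\to X^2$ to obtain $\Delta'_*[\cO_{M_\bd}]=\sum_\kappa \ev_3^*\cO_\kappa^\vee\otimes\ev_1^*\cO_\kappa$, and then applies the projection formula once. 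This is the same computation in a cleaner wrapper. One correction to your write-up: the base-change step needs only that $\ev_1$ is flat, which holds because it is a locally trivial fibration over the homogeneous space $X$ (Proposition~\ref{prop:loctriv}); rational singularities of $M_\bd$ and $M_{\bd'}$ play no role here and should not be invoked. Also, a small indexing slip: with the default $\be=(2,0,\dots,0,1)$ one has $\be'=(2,0,\dots,0,1)$ (or $(3)$ when $r=1$), not $(2,0,\dots,0,2)$; either way $|\be'|=3$, consistent with your later use of $\ev_3$.
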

\begin{proof}
  We may assume that $r>0$.  Set $\bd' = (d_0,\dots,d_{r-1})$.  It is
  enough to show that
  \begin{equation}\label{eqn:dsum_enough}
  \euler{M_\bd}(\ev^*(\cO_u \otimes \cO_v \otimes \cO_w^\vee)) =
  \sum_{\kappa \in W^P}
  \euler{M_{\bd'}}(\ev^*(\cO_u \otimes \cO_v \otimes \cO_\kappa^\vee)) 
  \cdot I_{d_r}(\cO_\kappa, \cO_w^\vee) \,.
  \end{equation}
  Let $\Delta : X \to X^2$ be the diagonal embedding.  The projection
  formula implies that $\euler{X^2}(\Delta_*[\cO_X] \cdot \cO_\sigma
  \otimes \cO_\tau^\vee) = \euler{X}(\cO_\sigma \cdot \cO_\tau^\vee) =
  \delta_{\sigma,\tau}$ for all $\sigma,\tau \in W^P$, and the class
  $\Delta_*[\cO_X] \in K(X^2)$ is uniquely determined by this
  property.  We deduce that (cf.\
  \cite[Thm.~3.4.1(i)]{brion:lectures})
  \[
  \Delta_*([\cO_X]) = \sum_{\kappa\in W^P} \cO_\kappa^\vee \otimes
  \cO_\kappa \in K(X^2) \,.
  \]
  Since the horizontal maps are flat in the fiber square
  \[
  \xymatrix{ M_{\bd'} \times M_{d_r} \ar[rr]^{\ev_3 \times \ev_1} &&
    X^2 \\
    M_\bd \ar[rr] \ar[u]_{\Delta'} && X \ar[u]_{\Delta}}
  \]
  we obtain $\Delta'_*[\cO_{M_\bd}] = (\ev_3 \times
  \ev_1)^*\Delta_*[\cO_X] = \sum_\kappa \ev_3^*(\cO_\kappa^\vee)
  \otimes \ev_1^*(\cO_\kappa)$.  Equation (\ref{eqn:dsum_enough})
  follows from this by another application of the projection formula.
\end{proof}

We need the Gysin formula from \cite[Thm.~3.1]{buch.mihalcea:quantum}
stated as Proposition~\ref{prop:gysin} below.  Notice that the
statement in \cite{buch.mihalcea:quantum} requires that the general
fibers of $f$ are rational.  However, this was used only to conclude
that the structure sheaf of any general smooth fiber has vanishing
higher cohomology, and rational connectivity suffices for this, see
e.g.\ \cite[Cor.~4.18(a)]{debarre:higher-dimensional}.

\begin{prop}[\cite{buch.mihalcea:quantum}]\label{prop:gysin}
  Let $f : X \to Y$ be a surjective morphism of projective varieties
  with rational singularities.  If the general fibers of $f$ are
  rationally connected, then $f_*[\cO_X] = [\cO_Y] \in K(Y)$.
\end{prop}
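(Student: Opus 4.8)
The plan is to deduce Proposition~\ref{prop:gysin} from the classical fact that a proper birational (or more generally fibered) morphism $f\colon X\to Y$ between varieties with rational singularities satisfies $f_*[\cO_X]=[\cO_Y]$ as soon as $R^if_*\cO_X=0$ for $i>0$ and the natural map $\cO_Y\to f_*\cO_X$ is an isomorphism. Concretely, the first step is to pass to resolutions: choose a desingularization $\pi\colon\wt X\to X$ with $\pi_*\cO_{\wt X}=\cO_X$ and $R^i\pi_*\cO_{\wt X}=0$ for $i>0$, which exists because $X$ has rational singularities. Then the composite $g=f\circ\pi\colon\wt X\to Y$ is a morphism from a smooth projective variety, and by the Leray spectral sequence for $f\circ\pi$ together with the vanishing of the higher direct images of $\pi$, one gets $R^ig_*\cO_{\wt X}=R^if_*\cO_X$ for all $i$ and $g_*\cO_{\wt X}=f_*\cO_X$. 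So it suffices to prove $g_*\cO_{\wt X}=\cO_Y$ and $R^ig_*\cO_{\wt X}=0$ for $i>0$, i.e.\ we may assume the source is smooth.

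The second step handles the source-smooth case. Since $g$ is surjective with $Y$ having rational singularities, we want $R^ig_*\cO_{\wt X}=0$ for $i>0$ and $g_*\cO_{\wt X}=\cO_Y$. For the first, the key input is that $Y$ has rational singularities together with the hypothesis that the \emph{general fiber} $F$ of $g$ is rationally connected; by \cite[Cor.~4.18(a)]{debarre:higher-dimensional} a smooth projective rationally connected variety has $H^i(F,\cO_F)=0$ for all $i>0$, and since a general fiber of $g$ is smooth (generic smoothness in characteristic zero) this forces the generic rank of $R^ig_*\cO_{\wt X}$ to vanish for $i>0$; one then invokes the theorem of Koll\'ar (or the argument in \cite{buch.mihalcea:quantum}) that for a morphism from a smooth projective variety the sheaves $R^ig_*\cO_{\wt X}$ are torsion-free, hence vanish identically once they vanish generically. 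For $g_*\cO_{\wt X}=\cO_Y$: Stein factorization writes $g$ as $\wt X\to Y'\to Y$ with $Y'\to Y$ finite and $\wt X\to Y'$ having connected fibers, so $g_*\cO_{\wt X}=\cO_{Y'}$; since the general fiber of $g$ is connected (being rationally connected, hence irreducible) the map $Y'\to Y$ is birational, and as $Y$ is normal (rational singularities implies normal, by Zariski's main theorem as recalled in \S\ref{sec:kb}) we get $Y'=Y$, so $g_*\cO_{\wt X}=\cO_Y$.

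The final step is bookkeeping in $K$-theory: from $g_*\cO_{\wt X}=\cO_Y$ and $R^ig_*\cO_{\wt X}=0$ for $i>0$ one has $Rg_*\cO_{\wt X}=\cO_Y$ in the derived category of coherent sheaves, hence $g_![\cO_{\wt X}]=\sum_i(-1)^i[R^ig_*\cO_{\wt X}]=[\cO_Y]$ in $K(Y)$. Combining with $\pi_![\cO_{\wt X}]=[\cO_X]$ (which follows the same way from $\pi_*\cO_{\wt X}=\cO_X$, $R^i\pi_*\cO_{\wt X}=0$) and the functoriality of pushforward in $K$-theory for proper morphisms of projective varieties, $g_!=f_!\circ\pi_!$, we conclude $f_*[\cO_X]=f_![\cO_X]=f_!\pi_![\cO_{\wt X}]=g_![\cO_{\wt X}]=[\cO_Y]$.

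I expect the main obstacle to be the vanishing $R^ig_*\cO_{\wt X}=0$ for $i>0$: one must combine generic smoothness, the rational connectivity hypothesis via the cohomological vanishing on a general smooth fiber, and the torsion-freeness of higher direct images of the structure sheaf under a morphism with smooth source — and it is precisely here that one uses rational \emph{connectivity} rather than rationality, exactly as flagged in the remark preceding the proposition. Everything else (the reduction to smooth source via resolution and the Leray spectral sequence, the Stein-factorization argument for $g_*\cO_{\wt X}=\cO_Y$ using normality of $Y$, and the passage to $K$-theory) is routine.
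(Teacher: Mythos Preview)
The paper does not give its own proof of Proposition~\ref{prop:gysin}; it is quoted from \cite[Thm.~3.1]{buch.mihalcea:quantum}, with the sole addition being the remark immediately preceding the statement that rational connectivity of the general fiber suffices in place of rationality, via \cite[Cor.~4.18(a)]{debarre:higher-dimensional}. Your sketch follows the same architecture as the argument in \cite{buch.mihalcea:quantum}: reduce to a smooth source by resolving $X$, obtain $g_*\cO_{\wt X}=\cO_Y$ via Stein factorization together with normality of $Y$, and deduce $R^ig_*\cO_{\wt X}=0$ for $i>0$ from the vanishing $H^i(F,\cO_F)=0$ on a general smooth fiber.

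One point to tighten: Koll\'ar's torsion-freeness theorem is for $R^if_*\omega_X$, not for $R^if_*\cO_X$; the latter need not be torsion-free for an arbitrary morphism with smooth projective source. In \cite{buch.mihalcea:quantum} one also resolves $Y$, so that both source and target are smooth, applies Koll\'ar's result for $\omega$ together with relative duality to obtain $R^ig_*\cO_{\wt X}=0$ for $i>0$, and then descends to the original $Y$ using that $Y$ has rational singularities. Your parenthetical ``or the argument in \cite{buch.mihalcea:quantum}'' ultimately covers this, but the direct torsion-freeness assertion for $\cO$ should not be invoked as stated.
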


\ignore{
\begin{lemma}\label{lem:ratsingZd}
  Let $\bd = (d_0,\dots,d_r)$ be such that $d_i > 0$ for $i > 0$.
  Assume that $|\bd| \geq d_X(3)$.  Then $\cZ_\bd$ has rational
  singularities and $\ev_* [\cO_{M_\bd}] = [\cO_{\cZ_\bd}]$.
\end{lemma}
\begin{proof}
  It follows from Corollary~\ref{cor:Zd} that $\cZ_\bd = \cZ_{d_0,2}
  \times X$, so Lemma~\ref{lem:ratint2}(a) implies $\cZ_\bd$ has
  rational singularities.  Since also $M_\bd$ has rational
  singularities by Proposition~\ref{prop:ratsingMde}, the identity
  $\ev_* [\cO_{M_\bd}] = [\cO_{\cZ_\bd}]$ follows from
  Proposition~\ref{prop:gysin} and Theorem~\ref{thm:rat3bdgw}.
\end{proof}
}

\begin{proof}[Proof of Theorem~\ref{thm:intro_vanish}]
  For each sequence $\bd = (d_0,\dots,d_r)$ with $d_i>0$ for $i>0$ we
  choose a resolution of singularities $\pi : \wt\cZ_\bd \to \cZ_\bd$.
  By Corollary~\ref{cor:Zd} we may assume that this resolution depends
  only on $|\bd|$ and $\min(d_0,d_X(2))$.  Then form the following
  commutative diagram, where $M'_\bd \subset \wt\cZ_\bd
  \times_{\cZ_\bd} M_\bd$ is the irreducible component mapping
  birationally to $M_\bd$, and $\wt M_\bd$ is a resolution of
  singularities of this component.
  \[
  \xymatrix{
    \wt M_\bd \ar[r]^\varphi & M'_\bd \ar[r]^{\subset\ \ \ \ \ \ } & 
    \wt\cZ_\bd \times_{\cZ_\bd} M_\bd \ar[r]^{\ \ \ \ \pi'} \ar[d]^{\ev'} &
    M_\bd \ar[d]^\ev \\
    && \wt\cZ_\bd \ar[r]^\pi & \cZ_\bd \ar[r]^\subset & X^3
  }
  \]

  It follows from Zariski's main theorem that the fibers of $\pi'
  \varphi$ are connected.  Using Theorem~\ref{thm:rat3bdgw} we deduce
  that the general fibers of $\ev'\varphi$ are connected.  Since the
  map $\ev'\varphi$ is smooth over a dense open subset of $\wt\cZ_\bd$
  by \cite[III.10.7]{hartshorne:algebraic*1}, it follows that the
  general fibers of $\ev'\varphi$ are in fact irreducible.
  Theorem~\ref{thm:rat3bdgw} therefore shows that the general fibers
  of $\ev'\varphi$ are rationally connected, so we obtain $(\ev'
  \varphi)_* [\cO_{\wt M_\bd}] = [\cO_{\wt\cZ_\bd}]$ by
  Proposition~\ref{prop:gysin}.  Since $M_\bd$ has rational
  singularities, we also have $(\pi' \varphi)_* [\cO_{\wt M_\bd}] =
  [\cO_{M_\bd}]$.  We deduce from the projection formula that
  \[
  \euler{M_\bd}(\ev^*(\cO_u \otimes
  \cO_v \otimes \cO_w^\vee)) = \euler{\wt\cZ_\bd}(\pi^*(\cO_u \otimes
  \cO_v \otimes \cO_w^\vee)) \,.
  \]
  Now Lemma~\ref{lem:bdgw} implies that
  \[
  \begin{split}
    N^{w,d}_{u,v} 
    &= \sum_\bd (-1)^r\, 
    \euler{M_\bd}(\ev^*(\cO_u\otimes\cO_v\otimes\cO_w^\vee)) \\
    &= \sum_\bd (-1)^r\, 
\euler{\wt\cZ_\bd}(\pi^*(\cO_u \otimes
  \cO_v \otimes \cO_w^\vee)) 
  \end{split}
  \]
  where both sums are over all sequences $\bd = (d_0,\dots,d_r)$ with
  $d_i>0$ for $i>0$ and $|\bd|=d$, and the sign $(-1)^r$ depends on
  the length of $\bd$.  Notice that the terms of the second sum depend
  only on $\min(d_0,d_X(2))$ and $r$.  In particular, the
  contributions of the sequences $\bd=(d)$ and $\bd=(d-1,1)$ cancel
  each other out.  Now let $0 \leq d' \leq d-2$.  For each $r$ with $1
  \leq r \leq d-d'$, there are exactly $\binom{d-d'-1}{r-1}$ sequences
  $\bd$ in the sum for which $d_0=d'$ and the length of $\bd$ is
  $r+1$.  Since $\sum_{r=1}^{d-d'} (-1)^r \binom{d-d'-1}{r-1} = 0$, it
  follows that the corresponding terms cancel each other out.  This
  completes the proof.
\end{proof}

\begin{remark}\label{rmk:eqkfinite}
  Theorem~\ref{thm:intro_vanish} is true also for the structure
  constants of the equivariant quantum $K$-theory ring $\QK_T(X)$,
  with the same proof.  In fact, if $\wt\cZ_\bd$ and $\wt M_\bd$ are
  chosen to be $T$-equivariant resolutions
  \cite[Thm.~7.6.1]{villamayor-u:patching}, then all maps used in the
  proofs of Lemma~\ref{lem:bdgw} and Theorem~\ref{thm:intro_vanish}
  are equivariant, and the arguments go through without change.  More
  details about the ring $\QK_T(X)$ can be found in
  \cite{buch.mihalcea:quantum}.
\end{remark}


\providecommand{\bysame}{\leavevmode\hbox to3em{\hrulefill}\thinspace}
\providecommand{\MR}{\relax\ifhmode\unskip\space\fi MR }
\providecommand{\MRhref}[2]{%
  \href{http://www.ams.org/mathscinet-getitem?mr=#1}{#2}
}
\providecommand{\href}[2]{#2}

\end{document}